\DeclareMathAlphabet{\mathpzc}{OT1}{pzc}{m}{it}
\begin{document}
 
\title{{\bf New proofs of two identities of Ramanujan}}         
\author{Patrick Morton}        
\date{Jan. 10, 2024}          
\maketitle

\begin{abstract}  A proof of two identities of Ramanujan involving theta functions of level $7$ is given which uses a specific modular function for $\Gamma_1(7)$ and Klein's projective representation of $PSL(2,7)$ into $PSL(3, \mathbb{C})$.  Four identities of Berndt and Zhang are derived as algebraic corollaries of the main proof.
\end{abstract}

\section{Introduction}

Define the following functions, using Ramanujan's notation
$$f(a,b) = \sum_{n=-\infty}^\infty{a^{n(n+1)/2} b^{n(n-1)/2}}, \ \ |ab| < 1,$$
where
$$f(a,b) = (-a;ab)_\infty(-b;ab)_\infty (ab;ab)_\infty,$$
with
$$(a;q)_\infty = \prod_{n=0}^\infty{(1-aq^n)}, \ \ |q| < 1.$$
We set
\begin{equation}
u = q^{1/56} f(-q^3,-q^4), \ \ v = q^{9/56} f(-q^2,-q^5), \ \ w = q^{25/56} f(-q,-q^6).
\label{eqn:1}
\end{equation}
 Also, let $\eta(\tau) = q^{1/24}\prod_{n=1}^\infty{(1-q^n)}$ denote the Dedekind $\eta$-function and let
 \begin{equation}
j_7^*(\tau) = \left(\frac{\eta(\tau)}{\eta(7\tau)}\right)^4+13 + 49 \left(\frac{\eta(7\tau)}{\eta(\tau)}\right)^4.
 \label{eqn:2}
 \end{equation}
 In \cite{bz}, Berndt and Zhang give proofs of the identities in the following theorem, which they say are ``dissimilar to all other theta-function identities existing in the literature" (written in 1994).

\newtheorem{thm}{Theorem}
\begin{thm}
With $u,v, w$ defined as in (\ref{eqn:1}), we have 
\begin{align}
\label{eqn:3} \frac{u^2}{v}-\frac{v^2}{w}+\frac{w^2}{u} &= 0,\\
\label{eqn:4} \frac{v}{u^2}-\frac{w}{v^2}+\frac{u}{w^2} &= \frac{\eta(\tau)}{\eta^2(7\tau)} j_7^*(\tau)^{1/3}.
\end{align}
\label{thm:1}
\end{thm}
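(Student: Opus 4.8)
The plan is to realize the triple $(u,v,w)$ as a weight-$\tfrac12$ vector-valued modular form for $SL(2,\mathbb{Z})$ whose transformation law factors through Klein's projective representation of $PSL(2,7)$, and then to read both identities off from the invariant theory of that representation together with a single Hauptmodul for $\Gamma_1(7)$.

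First I would record the Jacobi-triple-product forms $u=q^{1/56}(q^3;q^7)_\infty(q^4;q^7)_\infty(q^7;q^7)_\infty$, and likewise for $v,w$, and combine them into
\[
uvw=\eta(\tau)\,\eta^2(7\tau),
\]
which exhibits each of $u,v,w$ as a weight-$\tfrac12$ form. The decisive input is the behaviour under the generators of $SL(2,\mathbb{Z})$: from the fractional $q$-powers one reads off directly that $T:\tau\mapsto\tau+1$ sends $(u,v,w)\mapsto(\zeta u,\zeta^{9}v,\zeta^{25}w)$ with $\zeta=e^{2\pi i/56}$, whereas the action of $S:\tau\mapsto-1/\tau$ must be obtained from the transformation of the level-$7$ theta constants. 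Computing this $S$-matrix --- essentially Klein's $3\times3$ representation, with every multiplier and sign pinned down --- is the main obstacle, and all that follows rests on it.

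Granting the representation, identity \eqref{eqn:3} is pure invariant theory. The Klein quartic $\Phi_4=x^3y+y^3z+z^3x$ generates the degree-$4$ part of the $PSL(2,7)$-invariants, so in the coordinates dictated by the $S$-matrix $\Phi_4(u,v,w)=u^3w-uv^3+vw^3$ is a holomorphic modular form of weight $4\cdot\tfrac12=2$ on $SL(2,\mathbb{Z})$ carrying the fourth power of the multiplier of $(u,v,w)$. The fractional $q$-powers make all three monomials transform alike under $T$, so the combination is well defined; moreover its leading $q^{1/2}$ terms cancel, so it is a cusp form. As the corresponding space of holomorphic weight-$2$ forms is zero-dimensional, $\Phi_4(u,v,w)\equiv0$, which is precisely \eqref{eqn:3} cleared of denominators (multiply \eqref{eqn:3} by $uvw$).

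For \eqref{eqn:4} I would first use \eqref{eqn:3} to collapse the left-hand side. Put $A=u^2/v$, $B=v^2/w$, $C=w^2/u$, so that \eqref{eqn:3} reads $A-B+C=0$ and $v/u^2=A^{-1}$, etc.; then
\[
\frac{v}{u^2}-\frac{w}{v^2}+\frac{u}{w^2}=\frac1A-\frac1B+\frac1C=\frac{B^2-AC}{ABC}=\frac{B^2-AC}{\eta(\tau)\eta^2(7\tau)},
\]
using $A+C=B$ in the numerator and $ABC=uvw$. It remains to show that the weight-$1$ form $B^2-AC$ equals $\eta^2(\tau)\,j_7^*(\tau)^{1/3}$. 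Here $\phi:=(B^2-AC)/\eta^2(\tau)$ is the promised modular function for $\Gamma_1(7)$: the $S$- and $T$-laws show it is $\Gamma_1(7)$-invariant, and since $[\Gamma_0(7):\Gamma_1(7)]=3$ with the quotient acting on $\phi$ through cube roots of unity, $\phi^3$ descends to a $\Gamma_0(7)$-invariant function. Writing both $\phi^3$ and $j_7^*$ as rational functions of the Hauptmodul $t=(\eta(\tau)/\eta(7\tau))^4$ and matching divisors and leading $q$-coefficients forces $\phi^3=j_7^*$; the functions $\phi$ and $j_7^*(\tau)^{1/3}$ then share a leading term, so $\phi=j_7^*(\tau)^{1/3}$ and \eqref{eqn:4} follows. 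The delicate points are establishing the exact $\Gamma_1(7)$-invariance of $\phi$ from the $S$-matrix and controlling the cube-root branch so that the constants $1,13,49$ of \eqref{eqn:2} come out correctly.
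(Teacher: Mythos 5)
Your overall strategy is genuinely close to the paper's: both hinge on realizing the action of $SL(2,\mathbb{Z})$ on $(u,v,w)$ through Klein's $3\times 3$ projective representation, obtained from the transformation formulas for theta functions with characteristics. But the decisive step --- computing the $S$-matrix with every root of unity pinned down --- is exactly what you defer (``granting the representation''), and both identities rest on it in ways that cannot be waved through. For \eqref{eqn:3}, the uniqueness of the degree-$4$ invariant of the $PSL(2,7)$-representation only tells you that \emph{some} quartic in $u,v,w$ vanishes; which monomials occur and with which signs (i.e., that the invariant is $u^3w-uv^3+vw^3$ rather than some other arrangement) is precisely the content of the identity, and can only be read off from the explicit matrix. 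The paper's Theorem~\ref{thm:2} carries out that computation, and then extracts \eqref{eqn:3} not by a dimension count but by evaluating $h(A(\tau))$ in two ways ($s^2/t^3$ from the composed matrix $\textsf{T}$, and $(h-1)/h=(st^2-1)/(st^2)$ by a Hauptmodul and $q$-expansion argument), so that $s^3-st^3+t=0$ drops out. Your vanishing argument would also work, but the multiplier bookkeeping needs care: under $T$ the form $\Phi_4(u,v,w)$ picks up $-1$, the multiplier of $\eta^{12}$ (not of $\eta^4$), so the clean statement is that $\eta^{12}\Phi_4(u,v,w)$ lies in $S_8(SL_2(\mathbb{Z}))=0$; the claim that ``the corresponding space of weight-$2$ forms is zero'' is only justified once the multiplier is verified under $S$ as well --- which again requires the matrix.

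For \eqref{eqn:4}, your reduction of the left side via $A+C=B$ to $(B^2-AC)/(uvw)$ is correct and is a compact version of the paper's manipulation: indeed $\phi=(B^2-AC)/\eta^2(\tau)=(u^3v^2-w^3u^2+v^3w^2)/(\eta^3(\tau)\eta^2(7\tau))$, whose cube is exactly the paper's (\ref{eqn:27}). The gap is in ``writing $\phi^3$ as a rational function of $(\eta(\tau)/\eta(7\tau))^4$ and matching divisors and leading coefficients'': to do this you must determine the divisor of $u^3v^2-w^3u^2+v^3w^2$ on $X_1(7)$, which is not easier than the paper's route and requires the same inputs, namely the cusp values of $h$ (Lemma~\ref{lem:1}), the relation $\eta^4(\tau)/\eta^4(7\tau)=(h^3-8h^2+5h+1)/(h(h-1))$ (Theorem~\ref{thm:5}), and the factorization $j_7^*=(h^2-h+1)^3/\bigl(h(h-1)(h^3-8h^2+5h+1)\bigr)$ (Lemma~\ref{lem:2}); the paper then finishes purely algebraically, using Corollary~\ref{cor:1} to convert $(h^2-h+1)^3/(h^2(h-1)^2)$ into the cube of the desired theta expression. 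In short: the skeleton is right and a couple of your reductions are genuinely tidy, but the two computations that constitute the actual proof --- the explicit $S$-matrix and the expression of $\eta^4(\tau)/\eta^4(7\tau)$ (equivalently $j_7^*$) in the Hauptmodul --- are both left as acknowledged obstacles rather than carried out.
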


In \cite{bz}, the identity (\ref{eqn:4}) is expressed using the notation $f(-q) = f(-q,-q^2)$ in the equivalent form
$$\frac{v}{u^2}-\frac{w}{v^2}+\frac{u}{w^2} = \frac{f(-q)}{q^{7/8}f^2(-q^7)}\left(\frac{f^4(-q)}{f^4(-q^7)}+13q +49q^2\frac{f^4(-q^7)}{f^4(-q)}\right)^{1/3}.$$
See also Entry 32 in \cite[p. 176]{ber}.  These identities are from the unorganized material in Ramanujan's notebooks.  Identity (\ref{eqn:3}) is related to Klein's equation; see Corollary \ref{cor:1} after Theorem \ref{thm:3}.\medskip

The proof in \cite[Thm. 3.1]{bz} uses a general construction of a modular function for the congruence group
$$\Gamma^0(p) = \Big\{\left(\begin{array}{cc}a & b \\c & d\end{array}\right) \in \Gamma(1): b \equiv 0 \ (\textrm{mod} \ p)\Big\},$$
where $p$ is a prime satisfying $p \equiv \pm 1, \pm 7$ (mod $16$).  The identities in Theorem \ref{thm:1}, as well as four other similar identities, given in Examples 1-3 in \cite[pp. 238-242]{bz}, are all derived with $p = 7$, using the same construction with different parameters and producing different modular functions.  The identities follow from investigating the behavior of these modular functions at the two cusps $0, \infty$ of $\Gamma^0(7)$.  In addition, the proofs in \cite{bz} require some complicated relationships involving the sine function.  In the introduction, the authors mention that they would like to find a proof which does not use the theory of modular forms.  Such a proof has been provided by Cooper in \cite[Thm. 7.14, p. 440]{cp}, which makes use of an elliptic function identity of Weierstrass \cite[pp. 65-66, 76]{cp}, along with Lambert series, several forms of an identity of Halphen \cite[pp. 67, 71-72]{cp}, and some complicated relationships involving the functions $\alpha = v/w, \beta = -u/v, \gamma = w/u$.  The four additional identities in Examples 2 and 3 of \cite{bz} are not mentioned in \cite[Ch. 7]{cp}.  \medskip

In this note I will give unified proofs of the identities (\ref{eqn:3}) and (\ref{eqn:4}) and the four identities of Berndt and Zhang by making use of the properties of the single modular function
\begin{equation}
h(\tau) = \frac{u v^2}{w^3} = q^{-1} \prod_{k=0}^\infty{\frac{(1-q^{7k+4})(1-q^{7k+3})(1-q^{7k+5})^2(1-q^{7n+2})^2}{(1-q^{7n+6})^3 (1-q^{7k+1})^3}}.
\label{eqn:5}
\end{equation}
This product formula follows from the product expansions
\begin{align*}
u & = q^{1/56} \prod_{k=0}^\infty{(1-q^{7k+3})(1-q^{7k+4})(1-q^{7k+7})},\\
v & = q^{9/56} \prod_{k=0}^\infty{(1-q^{7k+2})(1-q^{7k+5})(1-q^{7k+7})},\\
w & = q^{25/56} \prod_{k=0}^\infty{(1-q^{7k+1})(1-q^{7k+6})(1-q^{7k+7})}.
\end{align*} 
The properties needed for the proof can be enumerated as follows.  These are all known facts, and can be found in \cite{du}, \cite{elk} and \cite{m}.
\begin{enumerate}[1.]
\item $h(\tau)$ is a modular function for the congruence group
$$\Gamma_1(7) = \Big\{\left(\begin{array}{cc}a & b \\c & d\end{array}\right) \in \Gamma(1): a \equiv d \equiv 1, c \equiv 0 \ (\textrm{mod} \ 7)\Big\},$$
(see \cite{du}, \cite{ds}), and is a Hauptmodul for this group, i.e. it generates the field $\textsf{K}_{\Gamma_1(7)}$ of modular functions for $\Gamma_1(7)$ which are defined over the field $k = \mathbb{Q}(\zeta_7)$, where $\zeta_7 = e^{2\pi i/7}$, so that $\textsf{K}_{\Gamma_1(7)} = k(h(\tau))$.
\item $\displaystyle h\left(\frac{2\tau-1}{7\tau-3}\right) = \frac{h(\tau)-1}{h(\tau)} = \frac{u^2w}{v^3}$, for $\textrm{Im}(\tau) > 0$.
\item $h(-1/7\tau) = \frac{h-r}{(1-r)h-1}$, where $r = -(\zeta+\zeta^6)^2 (\zeta^3+\zeta^4)^{-1}$ (with $\zeta = e^{2 \pi i/7}$) and $\frac{1}{1-r} = -(\zeta^3+\zeta^4)^2(\zeta^2+\zeta^5)^{-1}$ are roots of $x^3-8x^2+5x+1$.  In particular, $\lim_{\tau \rightarrow \infty i} h(\frac{-1}{7\tau}) = \frac{1}{1-r}$.
\item From 2 and 3 and some additional considerations it follows that $\displaystyle \frac{\eta^4(\tau)}{\eta^4(7\tau)} = \frac{h^3-8h^2+5h+1}{h(h-1)}$, with $h = h(\tau)$ and $\textrm{Im}(\tau) > 0$.
\end{enumerate}
\medskip

Here I will assume 1 and prove 2, 3 and 4.  The relation (\ref{eqn:3}) is a corollary of the proof of fact 2, and the identity (\ref{eqn:4}) follows from facts 2 and 4, using (\ref{eqn:3}).   I will show that these facts all follow from Klein's projective representation of the group $SL_2(\mathbb{Z})$ into $PSL(3, \mathbb{C})$.  See Klein's paper \cite[pp. 106-107]{kl} and \cite{elk}.  The main tool used here is the transformation formulas for theta functions with characteristics, which is applied in the proof of Theorem \ref{thm:2}.  The proofs of properties 3 and 4 use the fact that the fields of modular functions considered here, namely, those whose corresponding congruence groups are $\Gamma_1(7), \Gamma_0(7)$ and the full modular group $\Gamma = SL_2(\mathbb{Z})$, are algebraic function fields of genus $0$; and that the cusps of these groups can be viewed as prime divisors at infinity of the corresponding algebraic function fields.  Normally, the choice of the prime divisor ``at infinity" in the rational function field $k(t)$ depends on the choice of a generator $t$.  In the context of the theory of modular functions the natural choice is $t = j(\tau)$. \medskip

An advantage of the proofs given here is that the four additional identities proved by Berndt and Zhang have purely algebraic derivations, using Ramanujan's two identities and the relationships holding between the function $h$ and $u,v,w$.  Hence, the only modular function which is necessary for these proofs is $h(\tau)$, although the auxiliary functions $s(\tau)=u/w$ and $t(\tau)=v/w$ are used in Theorems \ref{thm:2} and \ref{thm:3} to prove the necessary properties of $h(\tau)$.  The identities we prove in Theorems \ref{thm:6} and \ref{thm:8} are actually simpler than the identities proved by Berndt and Zhang in their Examples 2 and 3, since the latter identities are the cubes of the identities proved here.  In addition, the sine function does not appear explicitly in these proofs.  Instead, we use straightforward calculations in the field of $7$-th roots of unity.  Overall, the proofs given here have a largely algebraic character, while the proofs in \cite{bz} are more analytic.

\medskip

\section{Klein's projective representation}

We start with the following functions from \cite[p. 157]{du}:
\begin{align}
\label{eqn:6} s(\tau) = & \ =  \frac{u}{w} = q^{-3/7} \prod_{n \ge 1}{\frac{(1-q^{7n-3})(1-q^{7n-4})}{(1-q^{7n-1})(1-q^{7n-6})}} = \textstyle e\left(\frac{1}{7}\right) \frac{\theta {1/7 \atopwithdelims [] 1}(7\tau)}{\theta {5/7 \atopwithdelims [] 1}(7\tau)},\\
\label{eqn:7} t(\tau) = & \ = \frac{v}{w} = q^{-2/7} \prod_{n \ge 1}{\frac{(1-q^{7n-2})(1-q^{7n-5})}{(1-q^{7n-1})(1-q^{7n-6})}} = \textstyle e\left(\frac{1}{14}\right) \frac{\theta {3/7 \atopwithdelims [] 1}(7\tau)}{\theta {5/7 \atopwithdelims [] 1}(7\tau)}.
\end{align}
In this and the following formulas, $e(z) = e^{2\pi i z}$.  See \cite{fk} and \cite{du} for all the facts we need about theta functions with characteristics.  From the infinite products in (\ref{eqn:6}) and (\ref{eqn:7}) we get 
\begin{align}
\label{eqn:8} s(\tau) t^2(\tau) = & \ q^{-1} \prod_{n \ge 1}{\frac{(1-q^{7n-3})(1-q^{7n-4})(1-q^{7n-2})^2(1-q^{7n-5})^2}{(1-q^{7n-1})^3(1-q^{7n-6})^3}}\\
\label{eqn:9} = & \ h(\tau).
\end{align}

\begin{thm} The following identities hold.
\begin{align}
\label{eqn:13} s\left(\frac{-1}{\tau}\right) & = \frac{\varepsilon_1 s(\tau) - \eta_3 t(\tau) + 1}{s(\tau)-\varepsilon_1 t(\tau) -\eta_3}\\
\label{eqn:14} & = \frac{\varepsilon_1 s(\tau) +(\varepsilon_1^2-\varepsilon_1-1) t(\tau) + 1}{s(\tau)-\varepsilon_1 t(\tau) +\varepsilon_1^2-\varepsilon_1-1},
\end{align}
\begin{align}
\label{eqn:15} t\left(\frac{-1}{\tau}\right) & = \frac{-\eta_3 s(\tau) - t(\tau) - \varepsilon_1}{s(\tau)-\varepsilon_1 t(\tau) -\eta_3}\\
\label{eqn:16} & = \frac{(\varepsilon_1^2-\varepsilon_1-1) s(\tau) - t(\tau) - \varepsilon_1}{s(\tau)-\varepsilon_1 t(\tau) + \varepsilon_1^2-\varepsilon_1-1}.
\end{align}
Here, $\varepsilon_1 = 1 + \eta_1 = 1 + \zeta_7 +\zeta_7^6$, and $\eta_i = \zeta_7^i + \zeta_7^{-i}$, with $\zeta_7 = e^{2\pi i/7}$.
\label{thm:2}
\end{thm}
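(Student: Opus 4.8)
The plan is to start from the theta-quotient expressions (\ref{eqn:6})--(\ref{eqn:7}), which write $s$ and $t$ as ratios of the three theta constants $\theta{1/7 \atopwithdelims [] 1}(7\tau)$, $\theta{3/7 \atopwithdelims [] 1}(7\tau)$, $\theta{5/7 \atopwithdelims [] 1}(7\tau)$, and to determine how this triple transforms under $S:\tau\mapsto -1/\tau$. Because the automorphy factors and the exponential prefactors (the powers of $q$) all cancel once we pass to the quotients $s=u/w$ and $t=v/w$, everything reduces to computing the linear action of $S$ on the column vector of these three theta constants. That linear action is precisely Klein's projective representation of $PSL(2,7)$ into $PSL(3,\mathbb{C})$, and exhibiting it explicitly is the whole content of the theorem.

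The main computation I would carry out in two steps. First, rewrite the argument as $7\left(\tfrac{-1}{\tau}\right)=\tfrac{-7}{\tau}=\tfrac{-1}{\tau/7}$ and apply the transformation formula for $\theta{a \atopwithdelims [] b}$ under $\tau\mapsto -1/\tau$ from \cite{fk}, which sends the characteristic $\binom{a}{b}$ to $\binom{b}{-a}$ and introduces a factor $\sqrt{-i\tau/7}$ together with an eighth-root-of-unity phase. This rewrites each $\theta{k/7 \atopwithdelims [] 1}(-7/\tau)$ as a theta constant with modular parameter $\tau/7$. Second, split the summation index of that theta constant into residue classes modulo $14$; completing the square in each inner sum recombines it into one of the original theta constants at modular parameter $7\tau$, while the outer coefficients become exponential (Gauss-type) sums in $\zeta=\zeta_7$. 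The decisive point is that the residue class giving odd characteristics produces a theta constant that vanishes identically; this is exactly what forces the transform of the triple to close on itself and makes the representation three-dimensional. Collecting the surviving coefficients, and pairing each residue $\rho$ with $14-\rho$, the Gauss sums collapse, via $\zeta+\zeta^2+\cdots+\zeta^6=-1$, to the quantities $\eta_i=\zeta^i+\zeta^{-i}$.

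With the resulting $3\times 3$ matrix in hand, I would divide numerator and denominator of the transformed quotients by $\theta{5/7 \atopwithdelims [] 1}(7\tau)$ and use $\theta{1/7 \atopwithdelims [] 1}/\theta{5/7 \atopwithdelims [] 1}=e(-1/7)\,s$ and $\theta{3/7 \atopwithdelims [] 1}/\theta{5/7 \atopwithdelims [] 1}=e(-1/14)\,t$ to rewrite $s(-1/\tau)$ and $t(-1/\tau)$ as linear-fractional expressions in $s(\tau),t(\tau)$; once the phases are absorbed this yields (\ref{eqn:13}) and (\ref{eqn:15}). The second forms (\ref{eqn:14}) and (\ref{eqn:16}) then follow by a purely algebraic substitution, namely $\eta_3=-(\varepsilon_1^2-\varepsilon_1-1)$, which holds because $\varepsilon_1=1+\eta_1$ gives $\varepsilon_1^2-\varepsilon_1-1=\eta_1^2+\eta_1-1=\eta_1+\eta_2+1=-\eta_3$, using $\eta_1^2=\eta_2+2$ and $\eta_1+\eta_2+\eta_3=-1$.

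I expect the main obstacle to be the careful bookkeeping of the characteristics and of the eighth-root-of-unity phase factors as they pass through the $S$-transformation and the residue decomposition, and then verifying that the surviving Gauss sums equal the stated algebraic numbers $\varepsilon_1,\eta_3,1$ in exactly the pattern appearing in (\ref{eqn:13})--(\ref{eqn:16}). A secondary technical point requiring care is the justification that the extra (odd-characteristic) theta constant vanishes, since this vanishing is what guarantees that the triple is stable under $S$, and hence that $s(-1/\tau),t(-1/\tau)$ are genuinely linear-fractional in $s,t$ rather than merely rational.
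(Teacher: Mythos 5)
Your plan follows essentially the same route as the paper's proof: apply the theta transformation formula under $\tau\mapsto -1/\tau$, decompose the resulting theta constant at $\tau/7$ back into the three constants at $7\tau$ (the paper cites Duke's formula for this), use the vanishing of the odd-characteristic constant $\theta{1 \atopwithdelims [] 1}$ to close the triple, collect the root-of-unity coefficients, and divide by $\theta{5/7 \atopwithdelims [] 1}(7\tau)$; your reduction $\varepsilon_1^2-\varepsilon_1-1=-\eta_3$ is exactly the identification the paper uses to pass between (\ref{eqn:13})--(\ref{eqn:14}) and (\ref{eqn:15})--(\ref{eqn:16}). The only difference is that you obtain $t(-1/\tau)$ by the same direct computation, whereas the paper shortcuts it by noting the shared denominator and imposing $\textsf{A}^2=dI$ from the involutive nature of $\tau\mapsto -1/\tau$; the paper explicitly notes your direct route also works.
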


\begin{proof} 
From (\ref{eqn:6}) and the transformation formula
$$\theta {\epsilon \atopwithdelims [] \epsilon'}\left(\frac{-1}{\tau}\right) = \textstyle e(-1/8) \sqrt{\tau} e\left(\frac{\epsilon \epsilon'}{4}\right) \theta {\epsilon' \atopwithdelims [] -\epsilon}(\tau)$$
we have
\begin{align}
\notag s(-1/\tau) &= \textstyle e(\frac{1}{7}) \displaystyle \frac{\theta {1/7 \atopwithdelims [] 1}(-7/\tau)}{\theta {5/7 \atopwithdelims [] 1}(-7/\tau)}\\
\notag & = \frac{\theta {1 \atopwithdelims [] -1/7}(\tau/7)}{\theta {1 \atopwithdelims [] -5/7}(\tau/7)} = \frac{\theta {1 \atopwithdelims [] 1/7}(\tau/7)}{\theta {1 \atopwithdelims [] 5/7}(\tau/7)}\\
\label{eqn:17} & = \frac{\sum_{k=0}^6{\theta {\frac{1+2k}{7} \atopwithdelims [] 1}(7\tau)}}{\sum_{k=0}^6{\theta {\frac{1+2k}{7} \atopwithdelims [] 5}(7\tau)}},
\end{align}
where the last equality follows from \cite[(4.4)]{du}:
$$\theta {\epsilon \atopwithdelims [] \epsilon'}(\tau) = \sum_{k=0}^{N-1}{\theta {\frac{\epsilon+2k}{N} \atopwithdelims [] N\epsilon'}(N^2\tau)}.$$
Using $\theta {\epsilon \atopwithdelims [] \epsilon'}(\tau) = e(\mp \frac{\epsilon m}{2}) \theta {\pm \epsilon + 2\ell \atopwithdelims [] \pm \epsilon'+2m}(\tau)$, we have $\theta {1 \atopwithdelims [] 1}(\tau) = 0$, and the numerator in (\ref{eqn:17}) becomes
\begin{align*}
A &= \textstyle \theta {1/7 \atopwithdelims [] 1}(7\tau) +  \theta {3/7 \atopwithdelims [] 1}(7\tau) +  \theta {5/7 \atopwithdelims [] 1}(7\tau)\\
& \ \ \textstyle + e\left({\frac{9}{14}}\right)  \theta {5/7 \atopwithdelims [] 1}(7\tau) + e\left(\frac{11}{14}\right) \theta {3/7 \atopwithdelims [] 1}(7\tau) + e\left(\frac{13}{14}\right)  \theta {1/7 \atopwithdelims [] 1}(7\tau)\\
& = \textstyle (1+ e\left(\frac{13}{14}\right)) \theta {1/7 \atopwithdelims [] 1}(7\tau) + (1+e\left(\frac{11}{14}\right)) \theta {3/7 \atopwithdelims [] 1}(7\tau)\\
& \ \ \textstyle +(1+ e\left(\frac{9}{14}\right)) \theta {5/7 \atopwithdelims [] 1}(7\tau).
\end{align*}
Hence
\begin{align*}
\frac{A}{\theta {5/7 \atopwithdelims [] 1}(7\tau)} & = \textstyle (1+ e\left(\frac{13}{14}\right)) e(\frac{-1}{7}) s(\tau) + (1+e\left(\frac{11}{14}\right)) e(\frac{-1}{14}) t(\tau)\\
& \ \ \textstyle +(1+ e\left(\frac{9}{14}\right))\\
& = \textstyle (e(\frac{6}{7})-e(\frac{2}{7})) s(\tau) + (e(\frac{-1}{14})+e(\frac{5}{7})) t(\tau) + 1+ e(\frac{9}{14})\\
& = (\zeta^{12}-\zeta^4)s(\tau) + (\zeta^{13}+\zeta^{10}) t(\tau) + (1+\zeta^9),
\end{align*}
where $\zeta = e^{2\pi i/14}$.  Similarly, the denominator $B$ of (\ref{eqn:17}) satisfies
\begin{align*}
B & = \textstyle \theta {1/7 \atopwithdelims [] 5}(7\tau) +  \theta {3/7 \atopwithdelims [] 5}(7\tau) +  \theta {5/7 \atopwithdelims [] 5}(7\tau)\\
& \ \  \textstyle + \theta {9/7 \atopwithdelims [] 5}(7\tau) +  \theta {11/7 \atopwithdelims [] 5}(7\tau) +  \theta {13/7 \atopwithdelims [] 5}(7\tau)\\
& = \textstyle e\left(\frac{1}{7}\right) \theta {1/7 \atopwithdelims [] 1}(7\tau) + e\left(\frac{3}{7}\right) \theta {3/7 \atopwithdelims [] 1}(7\tau) +  e\left(\frac{5}{7}\right) \theta {5/7 \atopwithdelims [] 1}(7\tau)\\
& \ \  \textstyle + e\left(\frac{2}{7}\right) \theta {9/7 \atopwithdelims [] 1}(7\tau) +  e\left(\frac{4}{7}\right) \theta {11/7 \atopwithdelims [] 1}(7\tau) +  e\left(\frac{6}{7}\right) \theta {13/7 \atopwithdelims [] 1}(7\tau)\\
& = \textstyle e\left(\frac{1}{7}\right) \theta {1/7 \atopwithdelims [] 1}(7\tau) + e\left(\frac{3}{7}\right) \theta {3/7 \atopwithdelims [] 1}(7\tau) +  e\left(\frac{5}{7}\right) \theta {5/7 \atopwithdelims [] 1}(7\tau)\\
& \ \  \textstyle + e\left(\frac{13}{14}\right) \theta {5/7 \atopwithdelims [] 1}(7\tau) +  e\left(\frac{5}{14}\right) \theta {3/7 \atopwithdelims [] 1}(7\tau) +  e\left(\frac{11}{14}\right) \theta {1/7 \atopwithdelims [] 1}(7\tau),
\end{align*}
which leads to
\begin{align*}
\frac{B}{\theta {5/7 \atopwithdelims [] 1}(7\tau)} & = \textstyle (e\left(\frac{1}{7}\right)+ e\left(\frac{11}{14}\right)) e(\frac{-1}{7}) s(\tau) + (e\left(\frac{3}{7}\right)+e\left(\frac{5}{14}\right)) e(\frac{-1}{14}) t(\tau)\\
& \ \ \textstyle +(e\left(\frac{5}{7}\right)+ e\left(\frac{13}{14}\right))\\
& = \textstyle (1+e(\frac{9}{14})) s(\tau) + (e(\frac{5}{14})+e(\frac{4}{14})) t(\tau) + (e(\frac{5}{7})+ e(\frac{13}{14}))\\
& = (1+\zeta^9)s(\tau) + (\zeta^{5}+\zeta^{4}) t(\tau) + (\zeta^{10}-\zeta^6).
\end{align*}
This gives that
\begin{align*}
s(-1/\tau) &= \frac{A}{B} = \frac{(\zeta^{12}-\zeta^4)s(\tau) + (\zeta^{13}+\zeta^{10}) t(\tau) + (1+\zeta^9)}{(1+\zeta^9)s(\tau) + (\zeta^{5}+\zeta^{4}) t(\tau) + (\zeta^{10}-\zeta^6)}\\
& = \frac{(\zeta_7^6-\zeta_7^2)s(\tau) + (\zeta_7^5-\zeta_7^3) t(\tau) + (1-\zeta_7)}{(1-\zeta_7)s(\tau) + (\zeta_7^2-\zeta_7^6) t(\tau) + (\zeta_7^5-\zeta_7^3)}\\
& = \frac{-\zeta_7^2(1+\zeta_7)(1+\zeta_7^2)s(\tau) -\zeta_7^3 (1+\zeta_7) t(\tau) + 1}{s(\tau) + \zeta_7^2(1+\zeta_7)(1+\zeta_7^2) t(\tau) -\zeta_7^3(1+\zeta_7)},
\end{align*}
with $\zeta_7 = \zeta^2 = e^{2\pi i/7}$.  Since $-\zeta_7^2(1+\zeta_7)(1+\zeta_7^2) = 1+\zeta_7+\zeta_7^6 = \varepsilon_1$ and $\zeta_7^3+\zeta_7^4 = \eta_3$, this yields the formulas (\ref{eqn:13}) and (\ref{eqn:14}). \medskip

A similar computation gives (\ref{eqn:15}) and (\ref{eqn:16}), or we may argue as follows.  Since the denominator of $t(-1/\tau)$ coincides with the denominator of $s(-1/\tau)$, we have that
$$t\left(\frac{-1}{\tau}\right) = \frac{a s(\tau)+b t(\tau) + c}{s(\tau)-\varepsilon_1 t(\tau) +\varepsilon_1^2-\varepsilon_1-1},$$
for some $a,b,c$.  Since $\tau \rightarrow -1/\tau$ has order $2$, the $3 \times 3$ matrix
\begin{equation*}
\textsf{A} = 
\left(
\begin{array}{ccc}
\varepsilon_1  & \varepsilon_1^2-\varepsilon_1-1  & 1  \\
a  & b  &  c \\
1  &  -\varepsilon_1  &  \varepsilon_1^2-\varepsilon_1-1 
\end{array}
\right),
\end{equation*}
which satisfies the projective equation
\begin{equation}
\textsf{A} \left(
\begin{array}{c}
s(\tau) \\
t(\tau) \\
1 
\end{array}
\right) = \left(
\begin{array}{c}
s(-1/\tau) \\
t(-1/\tau) \\
1 
\end{array}
\right),
\label{eqn:31}
\end{equation}
or, equivalently,
\begin{equation*}
\textsf{A} \left(
\begin{array}{c}
u(\tau) \\
v(\tau) \\
w(\tau) 
\end{array}
\right) = \left(
\begin{array}{c}
u(-1/\tau) \\
v(-1/\tau) \\
w(-1/\tau) 
\end{array}
\right),
\end{equation*}
must also satisfy $\textsf{A}^2 = dI$, and checking off-diagonal elements in the matrix $\textsf{A}^2$ shows that this can hold if and only if $a = \varepsilon_1^2-\varepsilon_1-1, b=-1, c= -\varepsilon_1$.  Hence,
\begin{equation}
\textsf{A} = 
\left(
\begin{array}{ccc}
\varepsilon_1  & \varepsilon_1^2-\varepsilon_1-1  & 1  \\
\varepsilon_1^2-\varepsilon_1-1  & -1  &  -\varepsilon_1 \\
1  &  -\varepsilon_1  &  \varepsilon_1^2-\varepsilon_1-1 
\end{array}
\right),
\label{eqn:18}
\end{equation}
proving (\ref{eqn:15}) and (\ref{eqn:16}).  Note that $\textsf{A}$ is a real symmetric matrix.
\end{proof}

\begin{thm} We have the formula
$$h\left(\frac{2\tau-1}{7\tau-3}\right)= \frac{h(\tau)-1}{h(\tau)}, \ \ \tau \in \mathbb{H}.$$
\label{thm:3}
\end{thm}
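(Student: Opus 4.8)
The plan is to realize the map $\tau\mapsto M\tau$, where $M=\left(\begin{smallmatrix}2 & -1\\ 7 & -3\end{smallmatrix}\right)$, through Klein's projective representation, using Theorem \ref{thm:2} for the generator $S:\tau\mapsto -1/\tau$ together with the translation $T:\tau\mapsto\tau+1$. First I would record that $M\in SL_2(\mathbb{Z})$ satisfies $M^2=M^{-1}$, so $M$ has order $3$ in $PSL_2(\mathbb{Z})$, and that $M\in\Gamma_0(7)$ normalizes $\Gamma_1(7)$ (a one-line check modulo $7$). Consequently $h\circ M$ is again a modular function for $\Gamma_1(7)$, and since $h$ is a Hauptmodul this means $h\circ M=\phi(h)$ for a fractional-linear $\phi$ of order $3$. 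Next I would factor $M$ in $PSL_2(\mathbb{Z})$ as $M=ST^{-3}ST^{2}S$, which is verified directly by composing the five Möbius maps and recovering $\frac{2\tau-1}{7\tau-3}$.

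The engine of the proof is the homomorphism property of Klein's representation acting on the column $(u,v,w)^{T}$. By Theorem \ref{thm:2} the generator $S$ acts (projectively, up to the common automorphy factor, which cancels in the weight-$0$ quotients $s,t,h$) through the symmetric matrix $\textsf{A}$ of (\ref{eqn:18}), while the expansions in (\ref{eqn:1}) show that $T$ acts through the diagonal matrix $D=\textrm{diag}(e(1/56),e(9/56),e(25/56))$. Hence $(u,v,w)(M\tau)^{T}$ is proportional to $\textsf{B}(u,v,w)^{T}$ with $\textsf{B}=\textsf{A}\,D^{-3}\,\textsf{A}\,D^{2}\,\textsf{A}$. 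The decisive claim, and the step I expect to be the main obstacle, is that this triple product collapses over $\mathbb{Q}(\zeta_7)$ to a scalar multiple of the cyclic permutation matrix, i.e.\ that $u(M\tau):v(M\tau):w(M\tau)=w:u:v$. This is a concrete but delicate calculation in the field of seventh roots of unity, using the entries $\varepsilon_1=1+\zeta_7+\zeta_7^{6}$ and $\varepsilon_1^2-\varepsilon_1-1$ of $\textsf{A}$ together with the phases in $D$; the symmetry of $\textsf{A}$, the order-$3$ constraint $\textsf{B}^{3}\in\mathbb{C}^{\times}I$, and the constant term of the resulting $q$-expansion serve as checks that the stray scalar is indeed $1$.

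Granting the monomial form $u:v:w\mapsto w:u:v$, the conclusion follows quickly. It gives $h(M\tau)=\frac{u(M\tau)\,v(M\tau)^{2}}{w(M\tau)^{3}}=\frac{u^{2}w}{v^{3}}$ and, applying the permutation twice, $h(M^{2}\tau)=\frac{vw^{2}}{u^{3}}$. Reading the leading $q$-orders from $u\sim q^{1/56}$, $v\sim q^{9/56}$, $w\sim q^{25/56}$ shows $\frac{u^{2}w}{v^{3}}\to 1$ and $\frac{vw^{2}}{u^{3}}\to 0$ as $q\to 0$, i.e.\ as $h\to\infty$. Since $h\circ M=\phi(h)$ and $h\circ M^{2}=\phi^{2}(h)$, this forces $\phi(\infty)=1$, $\phi(1)=\phi^{2}(\infty)=0$, and $\phi(0)=\phi^{3}(\infty)=\infty$; the unique fractional-linear map with these three values is $\phi(h)=(h-1)/h$. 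This proves the theorem, and comparing the two expressions $h(M\tau)=u^{2}w/v^{3}$ and $h(M\tau)=(h-1)/h$ and clearing denominators yields exactly $\frac{u^{2}}{v}-\frac{v^{2}}{w}+\frac{w^{2}}{u}=0$, so that identity (\ref{eqn:3}) emerges as a corollary of this proof, as claimed.
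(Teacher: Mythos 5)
Your proposal is correct in substance and follows essentially the same route as the paper: factor the M\"obius map into powers of $S$ and $T$, push the factorization through Klein's projective representation (the matrix $\textsf{A}$ of (\ref{eqn:18}) together with a diagonal matrix for $T$), observe that the product is a monomial matrix of cyclic type, and then pin down the fractional-linear map $\phi$ with $h\circ A=\phi(h)$ from the leading $q$-behavior of the resulting monomials in $u,v,w$ together with the order-$3$ condition. One detail in your ``decisive claim'' is off: the triple product is \emph{not} a scalar multiple of the $0$--$1$ permutation matrix, i.e.\ $u(M\tau):v(M\tau):w(M\tau)$ is not exactly $w:u:v$ --- the paper's matrix $\textsf{T}$ in (\ref{eqn:30}) has distinct nonzero entries $-\zeta^4,\zeta,\zeta^2$ up to a common factor, which is why $h(A\tau)=u^2w/v^3$ exactly but $h(A^2\tau)=-vw^2/u^3$ rather than $+vw^2/u^3$. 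Since your endgame uses only $\lim h(A\tau)=1$ and $\lim h(A^2\tau)=0$ as $q\to 0$, and both signs give the latter limit, the argument survives once the actual computation over $\mathbb{Q}(\zeta_7)$ is carried out.
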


\begin{proof}
First note the following: if
$$A(\tau) = \frac{2\tau-1}{3\tau-7}, \ \ A^{-1}(\tau) = \frac{3\tau-1}{7\tau-2},$$
and
$$\alpha(\tau) = \frac{-1}{\tau}, \ \ \beta(\tau) = \tau+1,$$
then
$$A^{-1}(\tau) = \beta \circ \alpha \circ \beta^2 \circ \alpha \circ \beta^4 \circ \alpha(\tau).$$
Using further that
\begin{equation}
\textsf{B} \left(
\begin{array}{c}
s(\tau) \\
t(\tau) \\
1 
\end{array}
\right) = \left(
\begin{array}{c}
s(\tau+1) \\
t(\tau+1) \\
1 
\end{array}
\right),
\label{eqn:32}
\end{equation}
where
\begin{equation*}
\textsf{B} = 
\left(
\begin{array}{ccc}
\zeta_7^4  & 0  &  0 \\
 0 & \zeta_7^5  & 0  \\
 0 & 0  &  1
\end{array}
\right),
\end{equation*}
equations (\ref{eqn:31}) and (\ref{eqn:32}) imply that
\begin{equation}
\textsf{T} \left(
\begin{array}{c}
s(\tau) \\
t(\tau) \\
1 
\end{array}
\right) = \left(
\begin{array}{c}
s(A^{-1}(\tau)) \\
t(A^{-1}(\tau)) \\
1 
\end{array}
\right),
\label{eqn:29}
\end{equation}
where (this time with $\zeta = \zeta_7$ and $\eta_i$ as in Theorem \ref{thm:2})
\begin{equation}
\textsf{T} = \textsf{B} \textsf{A} \textsf{B}^2 \textsf{A} \textsf{B}^4 \textsf{A} = 
\eta_1\eta_3^2 \left(
\begin{array}{ccc}
0  &  -\zeta^4 & 0  \\
0  & 0  &  \zeta  \\
\zeta^2 & 0 &  0 
\end{array}
\right).
\label{eqn:30}
\end{equation}

Since the matrix corresponding to $A(\tau) = \frac{2\tau-1}{7\tau-3}$ lies in $\Gamma_0(7)$, and $\Gamma_1(7)$ is normal in the former group, we know that $h(A(\tau))$ is also a Hauptmodul for $\textsf{K}_{\Gamma_1(7)}$.  Hence,
$$h(A(\tau)) = \frac{ah+b}{ch+d}, \ \ h = h(\tau) = st^2.$$
Squaring the matrix $\textsf{T} = \textsf{B} \textsf{A} \textsf{B}^2 \textsf{A} \textsf{B}^4 \textsf{A}$ gives
\begin{equation*}
\textsf{T}^2 = \eta_1^2 \eta_3^4\left(
\begin{array}{ccc}
0  & 0  & -\zeta^5  \\
 \zeta^3 & 0  & 0  \\
0  & -\zeta^6  &  0 
\end{array}
\right)
\end{equation*}
and applying $\textsf{T}^2$ to the vector $(s(\tau), t(\tau), 1)^{tr}$ yields
\begin{equation}
h(A(\tau)) = h(A^{-2}(\tau)) = \frac{-\zeta^5\zeta^6}{(-\zeta^6)^3}\frac{s^2}{t^3} = \frac{s^2}{t^3}.
\label{eqn:33a}
\end{equation}
Hence, we have
$$\frac{ast^2+b}{cst^2+d} = \frac{s^2}{t^3} \ \Longrightarrow \ ast^5+bt^3 = cs^3t^2+ds^2.$$
Comparing the leading term of the $q$-expansions of both sides of the last equation yields
$$aq^{-13/7}+bq^{-6/7} + \cdots = cq^{-13/7} + dq^{-6/7} + \cdots,$$
which implies that $a = c$.  Hence, we can assume $a = c = 1$.  \medskip

From (\ref{eqn:29}) and (\ref{eqn:30}) we find
\begin{align*}
h(A^2(\tau)) & = h(A^{-1}(\tau)) = s(A^{-1}(\tau))t^2(A^{-1}(\tau))\\
& = \frac{(-\zeta^4)(\zeta)^2 t}{\zeta^6 s^3}\\
& = -\frac{t}{s^3}.
\end{align*}
Thus,
$$\frac{(b+1)st^2+b(d+1)}{(d+1)st^2+b+d^2} = -\frac{t}{s^3},$$
which implies that
$$(b+1)s^4t^2 + b(d+1)s^3 = -(d+1)st^3-(b+d^2)t.$$
Comparing leading terms of the $q$-expansions gives that
$$(b+1)q^{-16/7} + b(d+1) q^{-9/7} + \cdots = -(d+1)q^{-9/7} + (b+d^2)q^{-2/7} + \cdots$$
and implies that $b=-1$.  Now the condition that the cube of the matrix
\begin{equation*}
S = \left(
\begin{array}{cc}
1 & -1  \\
1  &  d  
\end{array}
\right)
\end{equation*}
must be a diagonal matrix yields the condition $d^2+d=0$, hence $d = 0, -1$. But $d = -1$ is impossible, since $S$ is nonsingular, so that $d=0$.  Thus,
$h(A(\tau)) = \frac{h-1}{h}$, which is the assertion of Theorem \ref{thm:3}.
\end{proof}

\newtheorem{cor}{Corollary}

\begin{cor} With $s = s(\tau), t = t(\tau)$, we have
$$s^3 -st^3+t = 0,$$
which is equivalent to
$$u^3 w - v^3 u +w^3 v = 0.$$
\label{cor:1}
\end{cor}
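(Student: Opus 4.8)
The plan is to read off the corollary as a purely algebraic consequence of two evaluations of $h(A(\tau))$ that are already in hand from the proof of Theorem~\ref{thm:3}. Recall that $h = st^2$, that Theorem~\ref{thm:3} gives $h(A(\tau)) = (h-1)/h$, and that the intermediate identity (\ref{eqn:33a}) supplies the independent evaluation $h(A(\tau)) = s^2/t^3$. Equating these two expressions is the whole idea.

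First I would set the two formulas for $h(A(\tau))$ equal and substitute $h = st^2$:
\[
\frac{s^2}{t^3} = \frac{st^2 - 1}{st^2}.
\]
Cross-multiplying gives $s^3 t^2 = s t^5 - t^3$. Since $t = t(\tau)$ is not identically zero (its $q$-expansion in (\ref{eqn:7}) begins with $q^{-2/7}$), I may divide through by $t^2$ to obtain
\[
s^3 = st^3 - t, \qquad \text{equivalently} \qquad s^3 - st^3 + t = 0,
\]
which is the first assertion.

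For the equivalent form in $u,v,w$, I would substitute $s = u/w$ and $t = v/w$ from (\ref{eqn:6}) and (\ref{eqn:7}), yielding
\[
\frac{u^3}{w^3} - \frac{u}{w}\cdot\frac{v^3}{w^3} + \frac{v}{w} = 0,
\]
and then clear denominators by multiplying through by $w^4$ to get $u^3 w - v^3 u + w^3 v = 0$, as claimed.

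I do not expect any genuine obstacle here, since every ingredient was already produced inside the proof of Theorem~\ref{thm:3}; the corollary is just the algebraic identity one gets by comparing the two evaluations of $h(A(\tau))$. The only point meriting a word of justification is the cancellation of $t^2$, which is legitimate because $t$ is a nonzero modular function.
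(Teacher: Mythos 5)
Your proposal is correct and is essentially identical to the paper's own proof, which reads the corollary off from the equality $h(A(\tau)) = \frac{s^2}{t^3} = \frac{st^2-1}{st^2}$; you have merely spelled out the cross-multiplication, the division by $t^2$, and the passage from $s,t$ to $u,v,w$.
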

\begin{proof}
This is immediate from the relation $h(A(\tau)) = \frac{s^2}{t^3} = \frac{st^2-1}{st^2}$.
\end{proof}

Note that Corollary \ref{cor:1} is equivalent to Ramanujan's identity (\ref{eqn:3}).  Furthermore, this corollary shows that $(X,Y,Z) = (s(\tau), -1, t(\tau))$ satisfies Klein's equation
$$X^3Y+Y^3Z+Z^3X = 0.$$
See \cite[pp. 84-85]{elk}, where the functions $\textsf{x},\textsf{y},\textsf{z}$ satisfying Klein's equation are given by
$$\textsf{x} = -\eta(\tau)^3 w, \ \ \textsf{y} = \eta(\tau)^3 v, \ \ \textsf{z} = \eta(\tau)^3 u.$$
Theorem \ref{thm:3} implies that 
\begin{equation}
\lim_{\tau \rightarrow \infty i} h\left(\frac{2\tau-1}{7\tau-3}\right) = h(2/7) = 1,
\label{eqn:28}
\end{equation}
which we will use to prove Theorems \ref{thm:4} and \ref{thm:5} below. \medskip

Now we draw the following further consequence from Theorem \ref{thm:2}.  As $\tau \rightarrow \infty i$, Theorem \ref{thm:2} gives that
\begin{align}
\label{eqn:19} \lim_{\tau \rightarrow \infty i} h\left(\frac{-1}{7\tau}\right) & = \lim_{\tau \rightarrow \infty i} s\left(\frac{-1}{7\tau}\right) t^2\left(\frac{-1}{7\tau}\right)\\
\notag & = \textrm{lim}_{q \rightarrow 0} \frac{\varepsilon_1 - \eta_3 \frac{t(7\tau)}{s(7\tau)} + \frac{1}{s(7\tau)}}{1-\varepsilon_1 \frac{t(7\tau)}{s(7\tau)} -\frac{\eta_3}{s(7\tau)}} \left(\frac{-\eta_3 - \frac{t(7\tau)}{s(7\tau)} -  \frac{\varepsilon_1}{s(7\tau)}}{1-\varepsilon_1 \frac{t(7\tau)}{s(7\tau)} -\frac{\eta_3}{s(7\tau)}}\right)^2\\
\label{eqn:20} & = \varepsilon_1 \eta_3^2 = -(\zeta^3+\zeta^4)^2(\zeta^2+\zeta^5)^{-1} = \frac{1}{1-r},
\end{align}
where $r = -(\zeta+\zeta^6)^2 (\zeta^3+\zeta^4)^{-1}$ (with $\zeta = e^{2 \pi i/7}$), as in fact 3 of the Introduction. \medskip

By the fact 1 listed in the Introduction and $\textsf{K}_{\Gamma(1)} \subset \textsf{K}_{\Gamma_1(7)}$, the $j$-function is a rational function of $h(\tau)$, so we have
$$j(\tau) = \frac{P(h(\tau))}{Q(h(\tau))}, \ \ (P(X),Q(X)) = 1.$$
Thus, $h=h(\tau)$ satisfies the equation
$$P(h(\tau)) - j(\tau) Q(h(\tau)) = 0.$$
By the Extended $q$-Expansion Principle \cite[p. 62]{sch} and the fact that the coefficients in the $q$-expansion of $h(\tau)$ lie in $\mathbb{Q}$, we have $P(X), Q(X) \in \mathbb{Q}[X]$.  Let $\wp_\infty$ be the prime divisor of the algebraic function field $k(j)$ dividing the denominator of $j = j(\tau)$.  There is a 1-1 correspondence between prime divisors of the field $k(j)$ and $\mathbb{H}/\Gamma(1) \cup \{\infty\}$, and $\wp_\infty$ corresponds to the cusp $\infty$.  The prime divisor $\wp_\infty$ of $k(j)$ extends to a number of prime divisors of $k(h)$ ``at infinity'', and these prime divisors correspond 1-1 to the cusps of $\mathbb{H}/\Gamma_1(7)$.  It is known that there are $6$ such cusps (see \cite[p. 102]{ds}).  One of these cusps is $\tau = \infty i$, at which $h(\tau)$ takes the value $\infty$, by (\ref{eqn:5}); this cusp corresponds therefore to the pole divisor $\mathfrak{p}_\infty$ of $h$.  Since $\mathfrak{p}_\infty \mid \wp_\infty$, it follows that $\textrm{deg}(P(X)) > \textrm{deg}(Q(X))$.

\newtheorem{lem}{Lemma}

\begin{lem} The values of $h(\tau)$ at the six cusps of $\Gamma_1(7)$ are $\{0, 1, \infty, r, r' = \frac{1}{1-r}, r'' = \frac{r-1}{r}\}$, where $r, r', r''$ are the roots of $x^3-8x^2+5x+1$ in $k = \mathbb{Q}(\zeta_7)$.  Hence, the prime divisors of $\wp_\infty$ in $k(h)$ are the primes
$$\mathfrak{p}_\infty,  \mathfrak{p}_0,  \mathfrak{p}_1,  \mathfrak{p}_r,  \mathfrak{p}_{r'},  \mathfrak{p}_{r''},$$
where $\mathfrak{p}_a$ is defined by $h = h(\tau) \equiv a$ (mod $\mathfrak{p}_a$) in $\textsf{K}_{\Gamma_1(7)}$ and the principal divisor of $h$ is $(h) = \frac{\mathfrak{p}_0}{\mathfrak{p}_\infty}$.
\label{lem:1}
\end{lem}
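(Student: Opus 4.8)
The plan is to use Theorem \ref{thm:3} as the engine. The matrix $A=\left(\begin{smallmatrix}2&-1\\7&-3\end{smallmatrix}\right)$ defining $A(\tau)=\frac{2\tau-1}{7\tau-3}$ lies in $\Gamma_0(7)$ and normalizes $\Gamma_1(7)$, so it permutes the six cusps; and Theorem \ref{thm:3} shows it acts on the Hauptmodul by the M\"obius map $\phi(h)=\frac{h-1}{h}$, which one checks directly has order $3$. Consequently $h(A\tau)=\phi(h(\tau))$, so the set of values taken by $h$ at the cusps is stable under $\phi$ and splits into $\phi$-orbits of length $1$ or $3$. I would begin by recording the two values already available: from the product (\ref{eqn:5}) we have $h(\tau)=q^{-1}+\cdots$, so $h$ has a simple pole at the cusp $\infty$ and $h(\infty)=\infty$; and from the limit (\ref{eqn:19})--(\ref{eqn:20}) we have $h(0)=\varepsilon_1\eta_3^2=\frac{1}{1-r}=r'$ at the cusp $0$ (note that $-1/(7\tau)\to 0$ as $\tau\to\infty i$).

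Next I would generate the two orbits. From the value $\infty$ the map $\phi$ produces $\infty\mapsto 1\mapsto 0\mapsto\infty$, attained at the cusps $\infty$, $A(\infty)=2/7$, $A^2(\infty)=3/7$; in particular this reproduces $h(2/7)=1$, consistent with (\ref{eqn:28}). From the value $r'$ one computes $\phi(r')=r$, $\phi(r)=r''$, $\phi(r'')=r'$, giving the orbit $r'\mapsto r\mapsto r''\mapsto r'$, attained at the cusps $0$, $A(0)=1/3$, $A^2(0)=1/2$. Using the relations $r'=\frac{1}{1-r}$ and $r''=\frac{r-1}{r}$ from fact $3$ of the Introduction, this second orbit is exactly the set of three roots of $x^3-8x^2+5x+1$, cyclically permuted by $\phi$.

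The crux is then to see that these six values are genuinely distinct and account for all the cusps. The cubic $x^3-8x^2+5x+1$ is separable and vanishes at neither $0$ nor $1$ (it takes the values $1$ and $-1$ there), so $r,r',r''$ are distinct and lie outside $\{0,1,\infty\}$; hence $0,1,\infty,r,r',r''$ are pairwise distinct. Since $h$ is a Hauptmodul, it is injective on the genus-zero curve $\mathbb{H}/\Gamma_1(7)$ together with its cusps, so six distinct values force six distinct cusps; as $\Gamma_1(7)$ has exactly six cusps, these are all of them, and the displayed set is precisely the set of cusp values. This is the step I expect to carry the weight, since it is what converts the orbit computation into a complete enumeration. (As a consistency check, the three roots are real and so cannot equal the order-two fixed points $\frac{1\pm\sqrt{-3}}{2}$ of $\phi$, confirming that both orbits really have length $3$.)

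Finally the divisor claims follow formally. Because $r,r',r''\in k=\mathbb{Q}(\zeta_7)$, every cusp value lies in $k$, so each cusp is the $k$-rational place $\mathfrak{p}_a$ of $k(h)=\textsf{K}_{\Gamma_1(7)}$ cut out by $h\equiv a$; these six places are exactly the primes of $k(h)$ lying over $\wp_\infty$. Being a Hauptmodul, $h$ has degree one as a map to $\mathbb{P}^1$, with its single simple pole at $\mathfrak{p}_\infty$ and its single simple zero at the cusp where $h=0$, namely $\mathfrak{p}_0$; therefore $(h)=\mathfrak{p}_0/\mathfrak{p}_\infty$, as asserted.
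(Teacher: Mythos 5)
Your proof is correct, but it completes the list of cusp values by a genuinely different mechanism than the paper. The paper establishes the four values $\infty$, $1$, $0$, $r'$ directly (the value $0$ via the observation that the product formula forces $h\neq 0$ on $\mathbb{H}$, so the zero divisor $\mathfrak{p}_0$ must sit over $\wp_\infty$), and then obtains the remaining two values by a rationality argument: writing $j=P(h)/Q(h)$ with $P,Q\in\mathbb{Q}[X]$ by the $q$-expansion principle, the vanishing $Q(r')=0$ forces the full irreducible polynomial $X^3-8X^2+5X+1$ to divide $Q$, so all three conjugates of $r'$ over $\mathbb{Q}$ are cusp values. You instead propagate the two seed values $\infty$ and $r'$ along orbits of the order-three map $\phi(h)=\frac{h-1}{h}$ coming from Theorem \ref{thm:3} and the normalizing element $A\in\Gamma_0(7)$. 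Your route avoids the $j$--$h$ relation and the $q$-expansion principle entirely, and it buys strictly more: it locates the cusp at which each value is attained (essentially proving Corollary \ref{cor:2} en route), whereas the paper's argument only shows that $r$ and $r''$ occur at \emph{some} cusps. The price is that you must know that $h(A\tau)=\phi(h(\tau))$ persists in the limit at cusps and that $\phi$ permutes the roots of $x^3-8x^2+5x+1$; the latter deserves the one-line verification $f\bigl(\tfrac{y-1}{y}\bigr)=-y^{-3}f(y)$ (fact 3 of the Introduction only asserts that $r$ and $\tfrac{1}{1-r}$ are roots, not that $\tfrac{r-1}{r}$ is the third), after which your observation that the real roots cannot be fixed points of $\phi$ correctly forces the orbit to have length three. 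Your closing deduction of the divisor statements and of $(h)=\mathfrak{p}_0/\mathfrak{p}_\infty$ from the Hauptmodul property matches what the paper takes for granted.
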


\begin{proof}
We make use of (\ref{eqn:28}), according to which $h(2/7) = 1$.  Hence, one of the values of $h$ at a cusp is $1$.  Further, the infinite product for $h(\tau)$ implies that $h(\tau) \neq 0$ for $\tau \in \mathbb{H}$.  But the numerator divisor $\mathfrak{p}_0$ of $h$ satisfies $h \equiv 0$ (mod $\mathfrak{p}_0$), so that $0$ is also an $h$-value at a cusp.  We appeal now to (\ref{eqn:19})-(\ref{eqn:20}), which imply that $h(0) = \frac{1}{1-r}$, where $r$ and $r' = \frac{1}{1-r}$ are roots of the polynomial $x^3-8x^2+5x+1$.  It follows that the prime divisor $\mathfrak{p}_{r'}$ satisfying $h \equiv r'$ (mod $\mathfrak{p}_{r'}$) divides $\wp_\infty$.  The relation between $j$ and $h$ implies that $Q(h) \equiv Q(r') = 0$; otherwise, $j(\tau)$ would have a finite value (mod $\mathfrak{p}_{r'}$).  Since $Q(X)$ has coefficients in $\mathbb{Q}$, the irreducible polynomial $X^3-8X^2+5X+1$ divides $Q(X)$.  This shows that all three roots of $Q(X)$ are $h$-values at cusps.
\end{proof}

\begin{thm}
We have the identity
\begin{equation}
h\left(\frac{-1}{7\tau}\right) = \frac{h-r}{(1-r)h-1}, \ \ h = h(\tau),
\label{eqn:21}
\end{equation}
where $r = -(\zeta+\zeta^6)^2 (\zeta^3+\zeta^4)^{-1} = 3(\zeta^5+\zeta^2) + \zeta^3 + \zeta^4 + 4$ and $\zeta = e^{2\pi i/7}$.
\label{thm:4}
\end{thm}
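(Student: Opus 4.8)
The plan is to show first that $h(-1/(7\tau))$ is a fractional linear transformation of $h(\tau)$, and then to pin that transformation down from three necessary properties. Let $\textsf{W} = \left(\begin{smallmatrix} 0 & -1 \\ 7 & 0\end{smallmatrix}\right)$ be the Fricke matrix, so that $\textsf{W}\tau = -1/(7\tau)$. I would begin by checking that $\textsf{W}$ normalizes $\Gamma_1(7)$: for $\gamma = \left(\begin{smallmatrix} a & b\\ c & d\end{smallmatrix}\right)\in\Gamma_1(7)$ one computes $\textsf{W}\gamma\textsf{W}^{-1} = \left(\begin{smallmatrix} d & -c/7 \\ -7b & a\end{smallmatrix}\right)$, which again lies in $\Gamma_1(7)$ since $c\equiv 0\ (\mathrm{mod}\ 7)$ makes $c/7$ integral and the diagonal entries stay $\equiv 1$. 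Consequently $h(\textsf{W}\tau) = h(-1/(7\tau))$ is once more invariant under $\Gamma_1(7)$, and as its $q$-expansion has coefficients in $k$ it is again a Hauptmodul for $\textsf{K}_{\Gamma_1(7)}$. By fact 1 of the Introduction it is therefore a Möbius transform of $h$, say $h(-1/(7\tau)) = M(h(\tau))$ with $M\in PGL_2(k)$.

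I would then collect three necessary conditions on $M$. First, since $\textsf{W}^2 = -7I$ acts trivially on $\mathbb{H}$, we have $M^2 = \mathrm{id}$; as $M\neq\mathrm{id}$ it must have trace zero, so writing $M=\left(\begin{smallmatrix} a & b\\ c & d\end{smallmatrix}\right)$ gives $d=-a$. Second, letting $\tau\to\infty i$, the limit $(\ref{eqn:20})$ gives $h(-1/(7\tau))\to r' = 1/(1-r)$ while $h(\tau)\to\infty$, so $M(\infty)=a/c=r'$; I normalize $c=1,\ a=r'$. Third, for the remaining coefficient I would invoke Theorem \ref{thm:3}. With $A = \left(\begin{smallmatrix} 2 & -1 \\ 7 & -3\end{smallmatrix}\right)$ and $\rho(h)=(h-1)/h$, Theorem \ref{thm:3} reads $h(A\tau)=\rho(h(\tau))$. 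A direct matrix computation gives $\textsf{W}A\textsf{W}^{-1}=A^{-1}\gamma$ with $\gamma = \left(\begin{smallmatrix} -13 & -4 \\ -42 & -13\end{smallmatrix}\right)\in\Gamma_1(7)$. Translating this conjugacy into the function field, using $\textsf{W}^{-1}=\textsf{W}$ as a Möbius map together with $h\circ\gamma = h$, yields the functional relation $M\rho M = \rho^{-1}$.

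To finish, let $R = \left(\begin{smallmatrix} 1 & -1\\ 1 & 0\end{smallmatrix}\right)$ represent $\rho$, so $R^{-1}$ represents $\rho^{-1}$. The relation $M\rho M=\rho^{-1}$ means $MRM=\lambda R^{-1}$ for some scalar $\lambda$; since the $(1,1)$-entry of $R^{-1}$ is $0$, the $(1,1)$-entry of $MRM$, namely $r'^2+r'(b-1)$, must vanish, which (as $r'\neq 0$) forces $b=1-r'$. Together with $d=-r'$ from the involution condition and $a=r',\ c=1$ from the value at $\infty$, this determines $M$ completely, giving $M(h)=\dfrac{r'h+1-r'}{h-r'}$. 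Substituting $r'=1/(1-r)$ converts this into $\dfrac{h-r}{(1-r)h-1}$, which is $(\ref{eqn:21})$. The alternative closed form $r=3(\zeta^5+\zeta^2)+\zeta^3+\zeta^4+4$ is then a routine verification in $\mathbb{Q}(\zeta)$.

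The main obstacle is the third step: deriving $M\rho M=\rho^{-1}$ from the matrix conjugacy $\textsf{W}A\textsf{W}^{-1}\equiv A^{-1}\ (\mathrm{mod}\ \Gamma_1(7))$ while keeping careful track of composition order and of the fact that $\gamma\in\Gamma_1(7)$ fixes $h$ but not $\tau$. I would also want to stress that both the involution condition and the relation are genuinely needed, since the relation by itself leaves the lower-right entry of $M$ undetermined; and that $r'^2-r'+1\neq 0$ (equivalently, $r'$ is not a fixed point of $\rho$), which holds because $r'$ is real whereas the fixed points of $\rho$ are primitive sixth roots of unity. This last nonvanishing both makes $M$ a true involution and guarantees that the three conditions have a unique solution, consistent with the cusp-value picture of Lemma \ref{lem:1}.
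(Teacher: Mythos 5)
Your proof is correct, and it takes a genuinely different route from the paper in its decisive step. Both arguments share the same skeleton: the Fricke involution normalizes $\Gamma_1(7)$, so $h(-1/(7\tau))=M(h(\tau))$ for a M\"obius transformation $M$; the limit (\ref{eqn:19})--(\ref{eqn:20}) forces $M(\infty)=r'=1/(1-r)$; and $M^2=\mathrm{id}$ forces trace zero. The difference is how the last coefficient is pinned down. The paper invokes Lemma \ref{lem:1}: $M$ must permute the six cusp values $\{0,1,\infty,r,r',r''\}$, and after excluding $0\mapsto 1$ by a short computation it extracts $b=-r$ from the fact that the roots of $x^3-8x^2+5x+1$ sum to $8$. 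You instead conjugate the matrix $A$ of Theorem \ref{thm:3} by the Fricke matrix; I checked that $\textsf{W}A\textsf{W}^{-1}=\bigl(\begin{smallmatrix}-3 & -1\\ 7 & 2\end{smallmatrix}\bigr)=A^{-1}\gamma$ with your $\gamma=\bigl(\begin{smallmatrix}-13 & -4\\ -42 & -13\end{smallmatrix}\bigr)\in\Gamma_1(7)$, that the translation to $M\rho M=\rho^{-1}$ (using $h\circ A^{-1}=\rho^{-1}\circ h$, $h\circ\gamma=h$, and $M^{-1}=M$ projectively) is sound, and that with $a=r'$, $c=1$, $d=-r'$, $b=1-r'$ one gets $MRM=(r'^2-r'+1)R^{-1}$ exactly, with $r'^2-r'+1\neq0$ since $r'$ is a root of a totally real cubic. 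So the three necessary conditions do determine $M$ uniquely, and substituting $r'=1/(1-r)$ gives (\ref{eqn:21}). What your route buys is independence from Lemma \ref{lem:1} and from the slightly ad hoc exclusion of the case $0\mapsto1$: it uses only Theorem \ref{thm:3} plus one matrix identity, and it makes the group relation $\textsf{W}A\textsf{W}^{-1}\equiv A^{-1}$ behind the formula explicit. What the paper's route buys is that it simultaneously tracks which cusp values are exchanged, which feeds directly into Corollary \ref{cor:2}; your argument recovers that only afterwards by evaluating $M$ at $0$ and $1$. One cosmetic point: you appeal to $M\neq\mathrm{id}$ before establishing $M(\infty)=r'\neq\infty$; either reorder, or note that $M=\mathrm{id}$ would already contradict the limit (\ref{eqn:20}).
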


\noindent {\bf Remark.} This identity is not necessary for the proof of (\ref{eqn:3}) and (\ref{eqn:4}), but its proof is convenient and short and helps to specify the cusps at which $h(\tau)$ takes the values in Lemma \ref{lem:1}.

\begin{proof}
The function $h(-1/7\tau)$ is a modular function for $\Gamma_1(7)$, since the mapping $\tau \rightarrow \frac{-1}{7\tau}$ normalizes $\Gamma_1(7)$.  It follows that $h(-1/7\tau)$ is a Hauptmodul for $\Gamma_1(7)$, which implies that
$$h\left(\frac{-1}{7\tau}\right) = \frac{ah+b}{ch+d}, \ \ h = h(\tau), \ \ ad-bc \neq 0.$$
As an automorphism of $k(h)$, the mapping $\tau \rightarrow \frac{-1}{7\tau}, h \rightarrow \frac{ah+b}{ch+d}$ must permute the prime divisors at infinity.  Thus, by (\ref{eqn:19})-(\ref{eqn:20}), the value $h(\infty i) = \infty$ is mapped to the value $\frac{1}{1-r}$, so we may assume that
$$h\left(\frac{-1}{7\tau}\right) = \frac{h+b}{(1-r)h-1},$$
where $d= -1$ for the mapping to have order $2$.  The value $0$ cannot map to $1$, since this would imply $b=-1$; but then the value $r$ would map to
$$\frac{r-1}{(1-r)r-1} = \textstyle \frac{5}{49}r^2-\frac{43}{49}r+\frac{41}{49},$$
which is not a conjugate of $r$.  Hence, the value $0$ maps to $-b$ and $1$ maps to $(b + 1)(r^2 - 8r + 5)$, both of which must be roots of $f(x) = x^3-8x^2+5x+1$.  Since the sum of the roots of $f(x)$ is $8$, this implies that
$$\frac{1}{1-r} -b + (b + 1)(r^2 - 8r + 5) = 8,$$
and solving for $b$ gives that $b = \frac{r+1}{r^2-8r+4} = -r$.
\end{proof}

\begin{cor} The function $h(\tau)$ takes the following values at the indicated cusps of $\Gamma_1(7)$:
\begin{align*}
&h(\infty i) = \infty, \ \ \textstyle h(\frac{2}{7}) = 1, \ \ h(\frac{3}{7}) = 0;\\
&h(0) = \textstyle \frac{1}{1-r}, \ \ h(\frac{-1}{2}) = \frac{r-1}{r}, \ \ h(\frac{-1}{3}) = r.
\end{align*}
\label{cor:2}
\end{cor}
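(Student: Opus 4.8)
The plan is to propagate the single value $h(\infty i)=\infty$ (read off from the product (\ref{eqn:5})) through all six cusps using nothing but the two substitution formulas already established: Theorem \ref{thm:3} for $A(\tau)=\frac{2\tau-1}{7\tau-3}$ and Theorem \ref{thm:4} for the involution $\tau\mapsto\frac{-1}{7\tau}$. Writing $\phi(x)=\frac{x-1}{x}$ and $\psi(x)=\frac{x-r}{(1-r)x-1}$, these theorems read $h(A(\tau))=\phi(h(\tau))$ and $h(\frac{-1}{7\tau})=\psi(h(\tau))$. The matrix of $A$ satisfies $A^3=I$ projectively, so $\phi$ has order $3$, while $\psi$ is an involution. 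Both act on the six-element value set $\{0,1,\infty,r,r',r''\}$ of Lemma \ref{lem:1}, and the whole corollary becomes a matter of tracking two orbits and evaluating a handful of limits $\tau\to\infty i$, exactly in the spirit of (\ref{eqn:28}) and (\ref{eqn:19})-(\ref{eqn:20}).

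First I would trace the $A$-orbit of the cusp $\infty i$. A direct computation gives $A(\infty i)=\frac{2}{7}$ and $A(\frac{2}{7})=\frac{3}{7}$, so that $\{\infty i,\frac{2}{7},\frac{3}{7}\}$ is a single $A$-orbit. Since $h(A(\tau))=\phi(h(\tau))$ and $h(A^2(\tau))=\phi^2(h(\tau))=\frac{-1}{h(\tau)-1}$, letting $\tau\to\infty i$ (where $h\to\infty$) yields $h(\frac{2}{7})=\phi(\infty)=1$ and $h(\frac{3}{7})=\phi^2(\infty)=0$, the first of which simply recovers (\ref{eqn:28}).

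Next I would apply $W(\tau)=\frac{-1}{7\tau}$ to each of these three cusps. One checks $W(\infty i)=0$, $W(\frac{2}{7})=\frac{-1}{2}$ and $W(\frac{3}{7})=\frac{-1}{3}$; then $h(W(\tau))=\psi(h(\tau))$, combined with the values just found, gives on passing to the limit $h(0)=\psi(\infty)=\frac{1}{1-r}=r'$, $h(\frac{-1}{2})=\psi(1)=\frac{r-1}{r}=r''$ and $h(\frac{-1}{3})=\psi(0)=r$. This produces all six values and exhausts the list.

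The only point requiring care is the justification that these limits actually compute the value of $h$ at the target cusp rather than merely along one approach. This is legitimate because $h$ is meromorphic on the compactified curve $\mathbb{H}^*/\Gamma_1(7)$ and the maps $A,W$ carry $\infty i$ to the indicated cusps, so continuity at the cusps transports the value; the author already argues in exactly this way at (\ref{eqn:28}) and (\ref{eqn:19})-(\ref{eqn:20}), so I do not expect a genuine obstacle here. Moreover, the six computed numbers are precisely the six distinct elements of Lemma \ref{lem:1}, and since a Hauptmodul separates the cusps, obtaining six distinct values forces the representatives $\infty i,\frac{2}{7},\frac{3}{7},0,\frac{-1}{2},\frac{-1}{3}$ to lie over the six distinct cusps, so no separate $\Gamma_1(7)$-equivalence check is needed.
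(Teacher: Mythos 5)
Your proposal is correct and follows essentially the same route as the paper, which simply states that the values follow from the transformation formulas of Theorems \ref{thm:3} and \ref{thm:4} starting from $h(\frac{2}{7})=1$ and $h(0)=\frac{1}{1-r}$; you have merely written out the orbit-tracking under $\phi$ and $\psi$ explicitly and verified the cusp images $A(\infty i)=\frac{2}{7}$, $A(\frac{2}{7})=\frac{3}{7}$, $W(\infty i)=0$, $W(\frac{2}{7})=\frac{-1}{2}$, $W(\frac{3}{7})=\frac{-1}{3}$. Your closing remarks on continuity at the cusps and on distinctness of the six values are sound and consistent with the way the paper argues at (\ref{eqn:28}) and (\ref{eqn:19})--(\ref{eqn:20}).
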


\begin{proof}
These values follow easily from the transformation formulas in Theorems \ref{thm:3} and \ref{thm:4}, starting with $h(\frac{2}{7}) = 1$ and $h(0) = \frac{1}{1-r}$.
\end{proof}

\begin{thm} 
We have the identity
\begin{equation}
\frac{\eta^4(\tau)}{\eta^4(7\tau)} = \frac{h^3-8h^2+5h+1}{h(h-1)},\ \ h = h(\tau), \ \textrm{Im}(\tau) > 0.
\label{eqn:11}
\end{equation}
\label{thm:5}
\end{thm}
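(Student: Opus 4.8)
The plan is to recognize the left-hand side as the standard Hauptmodul for $\Gamma_0(7)$ and then to identify, via its divisor at the cusps, the rational function of $h$ that it must equal. Write $t_7 = \left(\frac{\eta(\tau)}{\eta(7\tau)}\right)^4$, the well-known Hauptmodul for $\Gamma_0(7)$. Since $\eta$ has no zeros on $\mathbb{H}$, the function $t_7$ is finite and nonzero throughout $\mathbb{H}$; it has a simple pole at the cusp $\infty$ with $t_7 = q^{-1} + O(1)$, and, by the Fricke relation $t_7(-1/7\tau) = 49/t_7(\tau)$, a simple zero at the cusp $0$. Because $\Gamma_1(7) \subset \Gamma_0(7)$, we have $t_7 \in \textsf{K}_{\Gamma_0(7)} \subset \textsf{K}_{\Gamma_1(7)} = k(h)$ by fact $1$, so $t_7 = F(h)$ for a unique rational function $F$, and the task becomes to compute $F$.

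First I would use Theorem~\ref{thm:3} to pin down the form of $F$. The quotient $\Gamma_0(7)/\Gamma_1(7)$ is cyclic of order $3$, generated by the class of $A(\tau) = \frac{2\tau-1}{7\tau-3}$, so an element of $k(h)$ is $\Gamma_0(7)$-invariant exactly when it is fixed by the order-$3$ substitution $h \mapsto \frac{h-1}{h}$ of Theorem~\ref{thm:3}; consequently the three $\Gamma_1(7)$-cusps lying over each cusp of $\Gamma_0(7)$ form a single orbit of this substitution. Using the cusp values of Corollary~\ref{cor:2}, the orbit through $\infty$ is $\{\infty, 1, 0\}$ (the $h$-values at the cusps over the cusp $\infty$ of $\Gamma_0(7)$, where $t_7 = \infty$) and the orbit through $r$ is $\{r, r', r''\}$ (the $h$-values at the cusps over the cusp $0$, where $t_7 = 0$). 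Since $t_7$ has no zeros or poles off the cusps and the degree-$3$ cover $X_1(7) \to X_0(7)$ is unramified at the cusps (three over each, so ramification index $1$), the function $F(h)$ must have simple poles exactly at $h = 0, 1, \infty$ and simple zeros exactly at $h = r, r', r''$. As $r, r', r''$ are the roots of $x^3 - 8x^2 + 5x + 1$ by Lemma~\ref{lem:1}, this forces
$$F(h) = C\,\frac{(h-r)(h-r')(h-r'')}{h(h-1)} = C\,\frac{h^3 - 8h^2 + 5h + 1}{h(h-1)}$$
for some constant $C$, the degrees of numerator and denominator being exactly right to produce the single simple pole at $h = \infty$.

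Finally I would fix $C$ by comparing leading $q$-coefficients. From the product (\ref{eqn:5}) one reads off $h = q^{-1} + O(1)$, so $F(h) = C\,h + O(1) = C\,q^{-1} + O(1)$, while $t_7 = q^{-1} + O(1)$; hence $C = 1$, which is (\ref{eqn:11}). As a consistency check, polynomial division gives $F(h) = h - 7 + O(1/h)$, and the product (\ref{eqn:5}) gives $h = q^{-1} + 3 + O(q)$, so $F(h) = q^{-1} - 4 + O(q)$, matching the expansion $t_7 = q^{-1} - 4 + O(q)$.

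I expect the main obstacle to be the cusp bookkeeping in the middle step: correctly matching the six cusps of $\Gamma_1(7)$ to the two cusps of $\Gamma_0(7)$ under the $A$-orbit action (this is where Theorems~\ref{thm:3} and~\ref{thm:4} and Corollary~\ref{cor:2} enter, as the ``additional considerations'' hinted at) and confirming that the cover is unramified there, so that every zero and pole of $F(h)$ has multiplicity exactly one. The remaining ingredients — that $t_7$ is finite and nonvanishing on $\mathbb{H}$, and the $q$-expansion comparison that fixes $C$ — are routine.
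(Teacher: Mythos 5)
Your proof is correct, but it takes a genuinely different route from the paper's. The paper forms the trace $z(\tau) = h(\tau) + h(A(\tau)) + h(A^2(\tau)) = \frac{h^3-3h+1}{h(h-1)}$ of $h$ down to $\textsf{K}_{\Gamma_0(7)}$ using Theorem~\ref{thm:3}, observes from its pole divisor $\mathfrak{p}_\infty\mathfrak{p}_0\mathfrak{p}_1$ and its $q$-expansion $q^{-1}+4+\cdots$ that $z$ is a Hauptmodul for $\Gamma_0(7)$ whose three $\Gamma_1(7)$-poles all sit over the cusp $\infty$, computes $z \equiv 8 \ (\mathrm{mod}\ \mathfrak{p}_r)$ so that $z$ takes the value $8$ at the cusp $0$, and concludes $z = (\eta(\tau)/\eta(7\tau))^4 + 8$ because both sides are Hauptmoduls with leading term $q^{-1}$ and the eta quotient vanishes at the cusp $0$. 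You never form the trace: instead you pull $(\eta(\tau)/\eta(7\tau))^4$ back to $X_1(7)$, read off its divisor $\mathfrak{p}_r\mathfrak{p}_{r'}\mathfrak{p}_{r''}\,/\,(\mathfrak{p}_\infty\mathfrak{p}_0\mathfrak{p}_1)$ from the unramified splitting of the two cusps of $\Gamma_0(7)$, and thereby pin down the rational function of $h$ up to the constant $C=1$. The two arguments consume the same inputs---Lemma~\ref{lem:1} and Corollary~\ref{cor:2} for the cusp values of $h$, the partition of the six cusps into the orbits $\{\infty,1,0\}$ and $\{r,r',r''\}$ under $h\mapsto\frac{h-1}{h}$, the Hauptmodul property of the eta quotient with its simple zero at the cusp $0$, and a leading-coefficient comparison---but organize them differently. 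Your divisor computation is arguably more direct and needs only leading terms of $q$-expansions, whereas the paper's trace construction pays for itself later: the function $z$ is reused verbatim in the proof of Theorem~\ref{thm:6}, where (\ref{eqn:33}) and (\ref{eqn:34}) are exactly the traces of $h$ and $1/h$.
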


\begin{proof}
We use the fact that $[\Gamma_0(7): \Gamma_1(7) \cup (-I)\Gamma_1(7)]=3$, from which it follows that $1, A, A^2$ are representatives for the cosets of $\Gamma_1[7] = \Gamma_1(7) \cup (-I)\Gamma_1(7)$ in $\Gamma_0(7)$.  It follows that the function
\begin{align}
\notag z(\tau) &= h(\tau)+h(A(\tau))+h(A^2(\tau))\\
& = h(\tau)+\frac{h(\tau)-1}{h(\tau)}+\frac{1}{1-h(\tau)} = \frac{h^3(\tau)-3h(\tau)+1}{h(\tau)(h(\tau)-1)}
\label{eqn:12}
\end{align}
is a modular function for $\Gamma_0(7)$.  Equation (\ref{eqn:12}) implies that the pole divisor of $z(\tau)$ in $\textsf{K}_{\Gamma_1(7)}$ is $\mathfrak{p}_\infty \mathfrak{p}_0 \mathfrak{p}_1$, so that $[\textsf{K}_{\Gamma_1(7)}:k(z(\tau))] = 3$ and $z(\tau)$ is a Hauptmodul for $\textsf{K}_{\Gamma_0(7)}$.  This, together with the $q$-expansion
$$z(\tau) = \frac{1}{q} + 4 + 2q + 8q^2 - 5q^3 - 4q^4 - 10q^5 + 12q^6 - 7q^7 + \cdots,$$
implies that the prime divisors $\mathfrak{p}_\infty, \mathfrak{p}_0, \mathfrak{p}_1$ of $\textsf{K}_{\Gamma_1(7)}$ lie above the prime divisor $\wp_\infty'$ of $\textsf{K}_{\Gamma_0(7)}$ corresponding to $\tau = \infty i$.  Hence, the prime divisors $\mathfrak{p}_r, \mathfrak{p}_{r'}, \mathfrak{p}_{r''}$ lie above the prime divisor $\wp_0'$ corresponding to the other cusp $\tau = 0$ of $\Gamma_0(7)$. (Note that $\textsf{K}_{\Gamma_1(7)}$ is a cyclic cubic extension of $\textsf{K}_{\Gamma_0(7)}$.)  Now $h(\tau) \equiv r$ modulo $\mathfrak{p}_r$ implies that
$$z(\tau) \equiv \frac{r^3-3r+1}{r(r-1)} = \frac{r^3-8r^2+5r+1}{r(r-1)} + 8 = 8 \ (\textrm{mod} \ \mathfrak{p}_r),$$
and therefore $z(\tau) \equiv 8$ (mod $\wp_0'$).  It is clear that $\left(\frac{\eta(\tau)}{\eta(7\tau)}\right)^4$ is a Hauptmodul for $\Gamma_0(7)$  whose value at the cusp $0$ is $0$ \cite[pp. 46, 51]{sch} (by the same argument as in the proof of Lemma \ref{lem:1}), and therefore $z(\tau) = \left(\frac{\eta(\tau)}{\eta(7\tau)}\right)^4+8$.  This shows that
\begin{equation*}
\left(\frac{\eta(\tau)}{\eta(7\tau)}\right)^4 =  \frac{h^3(\tau)-3h(\tau)+1}{h(\tau)(h(\tau)-1)}-8 = \frac{h^3(\tau)-8h^2(\tau)+5h(\tau)+1}{h(\tau)(h(\tau)-1)},
\end{equation*}
which is (\ref{eqn:11}).  See \cite[(4.24), p. 89]{elk}.  This proof is an expanded version of the same proof given in \cite[Appendix]{m}.
\end{proof}

\section{Proof of Ramanujan's identity (\ref{eqn:4}).}
  
We have proved the relation (\ref{eqn:3}) in Corollary \ref{cor:1}.
 We also note the relation
 \begin{equation}
 uvw = \eta(\tau) \eta^2(7\tau).
 \label{eqn:22}
 \end{equation}
 \bigskip
Hence, (\ref{eqn:4}) is equivalent to
\begin{equation}
\label{eqn:23} v^3 w^2 - w^3 u^2 + u^3 v^2 = \eta^3(\tau)\eta^2(7\tau) \cdot j_7^*(\tau)^{1/3}.
\end{equation}

We start the proof of (\ref{eqn:23}) with the following. \medskip

\begin{lem}
\begin{equation}
\label{eqn:24} j_7^*(\tau) = \frac{(h^2-h+1)^3}{h(h-1)(h^3-8h^2+5h+1)}, \ \ h = h(\tau).
\end{equation}
\label{lem:2}
\end{lem}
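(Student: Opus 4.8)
The plan is to derive Lemma \ref{lem:2} as a one-step substitution into the formula of Theorem \ref{thm:5}. Write $x = \left(\frac{\eta(\tau)}{\eta(7\tau)}\right)^4$, so that the definition (\ref{eqn:2}) of $j_7^*$ reads $j_7^*(\tau) = x + 13 + 49/x$. Theorem \ref{thm:5}, equation (\ref{eqn:11}), already expresses $x$ as a rational function of $h = h(\tau)$, namely $x = N/D$ with $N = h^3 - 8h^2 + 5h + 1$ and $D = h(h-1)$. So the entire problem is to rewrite $x + 13 + 49/x$ in lowest terms after this substitution.

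First I would clear denominators. Substituting $x = N/D$ into $j_7^* = x + 13 + 49/x$ and combining over the common denominator $ND$ gives
$$j_7^*(\tau) = \frac{N^2 + 13\,ND + 49\,D^2}{ND}.$$
Since $ND = h(h-1)(h^3-8h^2+5h+1)$ is exactly the denominator claimed in (\ref{eqn:24}), the whole content of the lemma reduces to the single polynomial identity
$$N^2 + 13\,ND + 49\,D^2 = (h^2-h+1)^3.$$

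The remaining step is to verify this identity by expanding both sides as polynomials in $h$. Both are of degree $6$: one computes $N^2 = h^6 - 16h^5 + 74h^4 - 78h^3 + 9h^2 + 10h + 1$, then $13\,ND = 13h^5 - 117h^4 + 169h^3 - 52h^2 - 13h$ and $49\,D^2 = 49h^4 - 98h^3 + 49h^2$, whose sum is $h^6 - 3h^5 + 6h^4 - 7h^3 + 6h^2 - 3h + 1$, and this is precisely $(h^2-h+1)^3$. There is no genuine obstacle here, since the only thing to check is an elementary polynomial identity; the substance of the lemma lies entirely in Theorem \ref{thm:5}, which has already been proved. A convenient consistency check is that the resulting degree-$6$ polynomial is palindromic, which it must be because $h^2 - h + 1$ is palindromic and hence so is its cube.
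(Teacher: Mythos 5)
Your proposal is correct and follows essentially the same route as the paper: substitute the expression for $\left(\eta(\tau)/\eta(7\tau)\right)^4$ from Theorem \ref{thm:5} into the definition of $j_7^*$, combine over the common denominator $h(h-1)(h^3-8h^2+5h+1)$, and verify that the numerator equals $(h^2-h+1)^3$ by direct expansion (the paper merely factors the intermediate sum slightly differently before reaching the same cube). All of your polynomial arithmetic checks out.
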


\begin{proof}
This follows directly from (2) and Theorem \ref{thm:5}:
\begin{align*}
j_7^*(\tau) & = \left(\frac{\eta(\tau)}{\eta(7\tau)}\right)^4+13 + 49 \left(\frac{\eta(7\tau)}{\eta(\tau)}\right)^4\\
& = \frac{h^3-8h^2+5h+1}{h(h-1)}+13+49\frac{h(h-1)}{h^3-8h^2+5h+1}\\
& = \frac{(h^3-8h^2+5h+1)^2+49h^2(h-1)^2+13h(h-1)(h^3-8h^2+5h+1)}{h(h-1)(h^3-8h^2+5h+1)}\\
& = \frac{(h^3 - 8h^2 + 5h + 1)(h^3 + 5h^2 - 8h + 1)+49h^2(h-1)^2}{h(h-1)(h^3-8h^2+5h+1)}\\
& = \frac{(h^2 - h + 1)^3}{h(h-1)(h^3-8h^2+5h+1)}.
\end{align*}
\end{proof}

Using Theorem \ref{thm:5} again, we insert
$$h^3-8h^2+5h+1 = h(h-1) \frac{\eta^4}{\eta_7^4}, \ \ \eta = \eta(\tau), \ \eta_7 = \eta(7\tau),$$
into (\ref{eqn:24}), giving
\begin{equation*}
j_7^*(\tau) = \frac{(h^2-h+1)^3}{h^2(h-1)^2} \frac{\eta_7^4}{\eta^4}.
\end{equation*}
Now we use
\begin{equation}
\label{eqn:25} h = \frac{uv^2}{w^3}, \ \ h-1 = \frac{u^3}{w^2v}
\end{equation}
the first of which comes from (\ref{eqn:5}) and the second which follows from (\ref{eqn:33a}):
$$h - 1 = h \times \frac{h-1}{h} = h \times h(A(\tau)) = \frac{uv^2}{w^3} \frac{s^2}{t^3} = \frac{uv^2}{w^3} \frac{u^2w}{v^3}.$$
This yields that
 \begin{align}
\notag j_7^*(\tau) &= \frac{(u^2 v^4-uv^2w^3+w^6)^3}{w^{18} h^2(h-1)^2} \frac{\eta_7^4}{\eta^4}\\
 \label{eqn:26} & = \frac{(u^2 v^4-uv^2w^3+w^6)^3}{u^8 v^2 w^8} \frac{\eta_7^4}{\eta^4}.
 \end{align}
 Now we have the algebraic relation
 \begin{equation*}
 v(u^2 v^4-uv^2w^3+w^6)-uw(u^3v^2-w^3u^2+v^3w^2) = (w^3-uv^2)(u^3 w - v^3 u + w^3 v),
 \end{equation*}
 where the right side is $0$ by Corollary \ref{cor:1}. Putting this into (\ref{eqn:26}) and using (\ref{eqn:22}) yields
 \begin{align}
\notag j_7^*(\tau) &= \frac{(u^3v^2-w^3u^2+v^3w^2)^3 u^3 w^3}{u^8 v^5 w^8} \frac{\eta_7^4}{\eta^4}\\
\notag &= \frac{(u^3v^2-w^3u^2+v^3w^2)^3}{u^5 v^5 w^5} \frac{\eta_7^4}{\eta^4}\\
\notag & = \frac{(u^3v^2-w^3u^2+v^3w^2)^3}{\eta^5 \eta_7^{10}} \frac{\eta_7^4}{\eta^4}\\
\label{eqn:27} & = \frac{(u^3v^2-w^3u^2+v^3w^2)^3}{\eta^9 \eta_7^6}.
 \end{align}
Taking cube roots in (\ref{eqn:27}) yields
 $$u^3v^2-w^3u^2+v^3w^2 =  \eta^3 \eta_7^2 j_7^*(\tau)^{1/3},$$
 which is (\ref{eqn:23}).  (Note that both sides of the last relation are asymptotic to $q^{3/8}$ as $\tau \rightarrow \infty i$.) This proves Ramanujan's identity (\ref{eqn:4}). \medskip
 
\section{Identities of Berndt and Zhang.}

As corollaries of this proof we have the following identities, which are simplifications of the identities in Example 2 of \cite[(4.22), (4.23)]{bz}.
 
 \begin{thm}
 We have the identities
 \begin{align}
 \label{eqn:33} \frac{v^2u}{w^3}+\frac{u^2w}{v^3}-\frac{w^2v}{u^3} &= \left(\frac{\eta(\tau)}{\eta(7\tau)}\right)^4+8,\\
  \label{eqn:34} \frac{w^3}{v^2u}+\frac{v^3}{u^2w}-\frac{u^3}{w^2v} &= -\left(\frac{\eta(\tau)}{\eta(7\tau)}\right)^4-5.
  \end{align}
  \label{thm:6}
  \end{thm}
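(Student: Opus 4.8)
The plan is to reduce both identities to rational-function identities in the single variable $h = h(\tau)$ by translating each of the six monomials in $u,v,w$ into an expression in $h$. Three of the needed translations are already available: $h = uv^2/w^3$ and $h-1 = u^3/(w^2 v)$ from (\ref{eqn:25}), together with $(h-1)/h = u^2 w/v^3$ from Theorem \ref{thm:3}; their reciprocals then dispose of the remaining three monomials. Writing $h_0 = h$, $h_1 = (h-1)/h$, $h_2 = 1/(1-h)$ for the cyclic orbit of $h$ under $A(\tau) = (2\tau-1)/(7\tau-3)$, I expect the left side of (\ref{eqn:33}) to be exactly $h_0 + h_1 + h_2$ and the left side of (\ref{eqn:34}) to be exactly $1/h_0 + 1/h_1 + 1/h_2$. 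The signs must be tracked with care, since $-w^2 v/u^3 = -1/(h-1) = 1/(1-h) = h_2$ for the third term of (\ref{eqn:33}), and $-u^3/(w^2 v) = -(h-1) = 1-h = 1/h_2$ for the third term of (\ref{eqn:34}).

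For (\ref{eqn:33}), the sum $h_0 + h_1 + h_2$ is precisely the function $z(\tau)$ of (\ref{eqn:12}), equal to $(h^3 - 3h + 1)/(h(h-1))$. The proof of Theorem \ref{thm:5} establishes $z(\tau) = (\eta(\tau)/\eta(7\tau))^4 + 8$, so (\ref{eqn:33}) follows at once from that identification together with the three translations above; in effect it is a restatement of the formula for $z$.

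For (\ref{eqn:34}), I would exploit that the orbit values multiply to a constant: $h_0 h_1 h_2 = h \cdot \frac{h-1}{h} \cdot \frac{1}{1-h} = -1$. Hence $1/h_0 + 1/h_1 + 1/h_2 = (h_1 h_2 + h_0 h_2 + h_0 h_1)/(h_0 h_1 h_2) = -e_2$, where $e_2$ is the second elementary symmetric function of the orbit. A short computation gives $e_2 = (h^3 - 3h^2 + 1)/(h(h-1))$, and combining Theorem \ref{thm:5} with the elementary check $(\eta(\tau)/\eta(7\tau))^4 + 5 = (h^3 - 3h^2 + 1)/(h(h-1))$ yields $-e_2 = -(\eta(\tau)/\eta(7\tau))^4 - 5$, which is (\ref{eqn:34}).

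The argument is entirely algebraic once the dictionary between $u,v,w$ monomials and $h$ is fixed, so there is no serious analytic obstacle. The only points requiring attention are the sign bookkeeping in the translation step and the observation that the product relation $h_0 h_1 h_2 = -1$ converts the sum of reciprocals in (\ref{eqn:34}) into $-e_2$; after that, both identities collapse onto the single formula of Theorem \ref{thm:5}.
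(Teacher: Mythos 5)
Your proposal is correct and follows essentially the same route as the paper: translate the six monomials into $h$ via (\ref{eqn:25}) and $h(A(\tau)) = (h-1)/h = u^2w/v^3$, identify the first sum with $z(\tau) = (\eta(\tau)/\eta(7\tau))^4 + 8$ from the proof of Theorem \ref{thm:5}, and reduce the second to the rational function $-(h^3-3h^2+1)/(h(h-1))$. The only cosmetic difference is that you organize the sum of reciprocals via the product relation $h_0h_1h_2 = -1$ and the elementary symmetric function $e_2$, where the paper simply computes it directly and rewrites it as $3 - z(\tau)$; both give the same result.
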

  
  \noindent{\bf Remark.} The identities in \cite[Ex. 2, (4.22), (4.23)]{bz} follow from simply cubing these two identities, using the relations
  \begin{equation*}
  u = -iF(3/7;7\tau), \ \ v = -i F(2/7; 7\tau), \ \ w = -i F(1/7;7\tau),
  \end{equation*}
  where
  $$F(t; z) = -i \sum_{n=-\infty}^\infty{(-1)^nq^{(n+t+1/2)^2/2}}, \ \ q = e^{2\pi i z},$$
  as in \cite[Eq. (2.4), p. 227]{bz}.
  
  \begin{proof}
  From (\ref{eqn:25}) and (\ref{eqn:12}) the left side of (\ref{eqn:33}) is
  \begin{equation*}
  \frac{v^2u}{w^3}+\frac{u^2w}{v^3}-\frac{w^2v}{u^3} = h+\frac{h-1}{h}-\frac{1}{h-1} = z(\tau) = \left(\frac{\eta(\tau)}{\eta(7\tau)}\right)^4+8.
  \end{equation*}
  Similarly, the left side of (\ref{eqn:34}) is
  \begin{align*}
  \frac{w^3}{v^2u}+&\frac{v^3}{u^2w}-\frac{u^3}{w^2v} = \frac{1}{h}+\frac{h}{h-1}-(h-1) = -\frac{h^3-3h^2+1}{h(h-1)}\\
  &= 3-\frac{h^3-3h+1}{h(h-1)} = 3-z(\tau) =  -\left(\frac{\eta(\tau)}{\eta(7\tau)}\right)^4-5.
  \end{align*}
  \end{proof}
  
  This proof makes clear that the left-hand sides of (\ref{eqn:33}) and (\ref{eqn:34}) are the traces of the functions $h, 1/h \in \textsf{K}_{\Gamma_1(7)}$ to the field $\textsf{K}_{\Gamma_0(7)}$.  The fact that the individual terms on the left-hand side of (\ref{eqn:34}) -- which are reciprocals of the terms in (\ref{eqn:33}) -- produce a similar expression to the right side of (\ref{eqn:33}), is a consequence of the fact that the function $h$ is a unit in the integral closure of $k\big[\left(\frac{\eta(\tau)}{\eta(7\tau)}\right)^4\big]$ in $\textsf{K}_{\Gamma_1(7)}$, by Theorem \ref{thm:5}.  The same idea can be used to prove the following identities.
  
  \begin{thm} We have the identities
  \begin{align*}
  \frac{v^4u^2}{w^6}+\frac{u^4w^2}{v^6}+\frac{w^4v^2}{u^6} &= \left(\frac{\eta(\tau)}{\eta(7\tau)}\right)^8+14\left(\frac{\eta(\tau)}{\eta(7\tau)}\right)^4+54,\\
  \frac{w^6}{v^4u^2}+\frac{v^6}{u^4w^2}+\frac{u^6}{w^4v^2} &=  \left(\frac{\eta(\tau)}{\eta(7\tau)}\right)^8+12\left(\frac{\eta(\tau)}{\eta(7\tau)}\right)^4+41.
  \end{align*}
  \label{thm:7}
  \end{thm}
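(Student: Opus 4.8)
The plan is to recognize both left-hand sides as traces from $\textsf{K}_{\Gamma_1(7)}$ down to $\textsf{K}_{\Gamma_0(7)}$, exactly as in the remark following Theorem \ref{thm:6}. The extension $\textsf{K}_{\Gamma_1(7)}/\textsf{K}_{\Gamma_0(7)}$ is cyclic of degree $3$, with Galois group generated by the substitution $h \mapsto h(A(\tau))$, so that $h$, $h(A(\tau))$ and $h(A^2(\tau))$ are the three conjugates of $h$. By Theorem \ref{thm:3} and (\ref{eqn:12}) these conjugates are $h$, $\frac{h-1}{h}$ and $\frac{1}{1-h}$. Once I show that the six monomials appearing in the theorem are the squares and inverse squares of these three conjugates, the two identities become the power sums $\mathrm{Tr}(h^2)$ and $\mathrm{Tr}(h^{-2})$, which I will evaluate by Newton's identities from the minimal polynomial of $h$ furnished by Theorem \ref{thm:5}.

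First I would match the terms. From (\ref{eqn:25}) we have $h = \frac{uv^2}{w^3}$, hence $h^2 = \frac{v^4u^2}{w^6}$. Using (\ref{eqn:33a}), $h(A(\tau)) = \frac{h-1}{h} = \frac{u^2w}{v^3}$, so $h(A(\tau))^2 = \frac{u^4w^2}{v^6}$; and from (\ref{eqn:12}), $h(A^2(\tau)) = \frac{1}{1-h} = -\frac{vw^2}{u^3}$, so $h(A^2(\tau))^2 = \frac{w^4v^2}{u^6}$. Thus the left side of the first identity is $\mathrm{Tr}(h^2)$. Taking reciprocals gives $h^{-2} = \frac{w^6}{v^4u^2}$, $h(A(\tau))^{-2} = \frac{v^6}{u^4w^2}$ and $h(A^2(\tau))^{-2} = \frac{u^6}{w^4v^2}$, so the left side of the second identity is $\mathrm{Tr}(h^{-2})$.

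Next I would record the minimal polynomial. Writing $x = \left(\frac{\eta(\tau)}{\eta(7\tau)}\right)^4$, Theorem \ref{thm:5} rearranges to
$$h^3 - (x+8)h^2 + (x+5)h + 1 = 0,$$
so the elementary symmetric functions of $h, h(A(\tau)), h(A^2(\tau))$ are $e_1 = x+8$, $e_2 = x+5$, $e_3 = -1$. Newton's identity $p_2 = e_1^2 - 2e_2$ then gives $\mathrm{Tr}(h^2) = (x+8)^2 - 2(x+5) = x^2 + 14x + 54$, which is the first identity. For the inverse squares I would pass to the reciprocal polynomial
$$S^3 + (x+5)S^2 - (x+8)S + 1 = 0,$$
whose roots are $h^{-1}, h(A(\tau))^{-1}, h(A^2(\tau))^{-1}$, with symmetric functions $\tilde e_1 = -(x+5)$ and $\tilde e_2 = -(x+8)$. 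Newton's identity again gives $\mathrm{Tr}(h^{-2}) = \tilde e_1^2 - 2\tilde e_2 = (x+5)^2 + 2(x+8) = x^2 + 12x + 41$, which is the second identity.

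I expect no serious obstacle: Theorem \ref{thm:5} already packages all the symmetric-function data that is needed, so no new modular input is required and the computation collapses to two applications of Newton's identities. The only points demanding care are the correct identification of the six monomials with $h^{\pm 2}$, $h(A(\tau))^{\pm 2}$, $h(A^2(\tau))^{\pm 2}$ — in particular the sign in $h(A^2(\tau)) = -\frac{vw^2}{u^3}$, which must be squared away — and keeping the coefficient reversal straight when forming the reciprocal polynomial. As a final consistency check I would compare the leading terms of the $q$-expansions of both sides, as is done after (\ref{eqn:27}).
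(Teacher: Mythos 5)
Your proposal is correct and follows essentially the same route as the paper: both identify the six monomials as the squares and inverse squares of the three conjugates $h$, $(h-1)/h$, $1/(1-h)$ of $h$ over $\textsf{K}_{\Gamma_0(7)}$ and then reduce everything to Theorem \ref{thm:5}. The only difference is that you evaluate the power sums $\mathrm{Tr}(h^{\pm 2})$ via Newton's identities applied to the cubic $h^3-(x+8)h^2+(x+5)h+1=0$, whereas the paper combines the squared conjugates into a single rational function of $h$ and matches it against $x^2+14x+54$ by direct expansion; your bookkeeping is marginally cleaner but the mathematical content is identical.
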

  
   \begin{proof}
  The first identity follows from (\ref{eqn:11}) and
  \begin{align*}
   \frac{v^4u^2}{w^6}+\frac{u^4w^2}{v^6}+\frac{w^4v^2}{u^6} &= h^2 + \left(\frac{h}{h-1}\right)^2+\left(\frac{1}{1-h}\right)^2\\
   & = \frac{h^6-2h^5+2h^4-4h^3+7h^2-4h+1}{h^2(h-1)^2}\\
   & = \left(\frac{h^3-8h^2+5h+1}{h(h-1)}\right)^2+14\frac{h^3-8h^2+5h+1}{h(h-1)}+54.
   \end{align*}
   The second is proved in the same way.
   \end{proof}  
    
 In a similar way, we can prove the identities from Example 3 of \cite[(4.24), (4.25)]{bz}.
 
 \begin{thm}
  We have the identities
 \begin{align}
 \label{eqn:35} \frac{u^4}{v^2}+\frac{w^4}{u^2}+\frac{v^4}{w^2} &= 2\eta(\tau)^2j_7^*(\tau)^{1/3},\\
  \label{eqn:36} \frac{v^2}{u^4}+\frac{u^2}{w^4}+\frac{w^2}{v^4} &= \frac{\eta^2(\tau)}{\eta^4(7\tau)} j_7^*(\tau)^{2/3}.
  \end{align}
  \label{thm:8}
  \end{thm}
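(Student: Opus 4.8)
The plan is to reduce both identities to purely algebraic consequences of Corollary \ref{cor:1}, exactly as was done for Theorems \ref{thm:6} and \ref{thm:7}, after first clearing the cube roots on the right-hand sides by means of the already-proved identity (\ref{eqn:23}). Write $P = u^3v^2 - w^3u^2 + v^3w^2$, so that (\ref{eqn:23}) reads $P = \eta^3\eta_7^2\, j_7^*(\tau)^{1/3}$, with $\eta = \eta(\tau)$ and $\eta_7 = \eta(7\tau)$. Using (\ref{eqn:22}) in the form $\eta_7^2 = uvw/\eta$, I would first rewrite the two right-hand sides as ratios of monomials in $u,v,w$: from $\eta^3\eta_7^2 = \eta^2\cdot uvw$ one gets $\eta^2 j_7^*(\tau)^{1/3} = P/(uvw)$, so the right side of (\ref{eqn:35}) is $2P/(uvw)$, while squaring gives $\eta^2\eta_7^{-4} j_7^*(\tau)^{2/3} = P^2/(uvw)^4$, the right side of (\ref{eqn:36}).

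Next I would clear denominators. Multiplying (\ref{eqn:35}) through by $u^2v^2w^2$ turns it into the polynomial identity
\[
u^6w^2 + v^2w^6 + u^2v^6 = 2\big(u^4v^3w - u^3vw^4 + uv^4w^3\big),
\]
and multiplying (\ref{eqn:36}) through by $(uvw)^4$ turns it into
\[
v^6w^4 + u^6v^4 + u^4w^6 = P^2.
\]
The crucial observation is that, writing $R = u^3w - v^3u + w^3v$ for the left-hand side of the relation in Corollary \ref{cor:1}, the difference of the two sides of the first identity is \emph{exactly} $R^2$, while $P^2$ minus the left-hand polynomial of the second identity is \emph{exactly} $-2u^2v^2w^2\,R$. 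Since $R = 0$ by Corollary \ref{cor:1} (equivalently by (\ref{eqn:3})), both identities follow at once. Expanding $R^2$ and $P^2$ and matching monomials is the only computation involved, and it is routine.

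The main thing to get right is the bookkeeping of signs and the branch of the cube root: since (\ref{eqn:36}) involves $j_7^*(\tau)^{2/3}$, one must take it to be the square of the particular cube root $j_7^*(\tau)^{1/3}$ already pinned down in the proof of (\ref{eqn:23}) by the leading $q$-behavior. I would close by recording the consistency check on the leading terms, namely that both sides of (\ref{eqn:35}) are asymptotic to $2q^{-1/4}$ and both sides of (\ref{eqn:36}) to $q^{-7/4}$ as $\tau \to \infty i$, which fixes the branch and rules out any spurious root of the cubed identities. The only genuine subtlety, then, is the algebraic identification of the ``defect'' of each cleared identity with a multiple of $R$; everything else belongs to the same circle of ideas already used for Theorems \ref{thm:6} and \ref{thm:7}, so that, as with those results, the identities of Berndt and Zhang in Example 3 are recovered purely algebraically by cubing.
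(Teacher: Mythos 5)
Your proposal is correct and follows essentially the same route as the paper: both reduce the right-hand sides to $2P/(uvw)$ and $P^2/(uvw)^4$ via (\ref{eqn:23}) and (\ref{eqn:22}), and then identify the defect of the resulting algebraic identity as $R^2$ (for (\ref{eqn:35})) and a multiple of $R$ (for (\ref{eqn:36})), which vanish by Corollary \ref{cor:1}. The only cosmetic difference is that you clear denominators to work with polynomial identities while the paper keeps the rational form (and also offers a second, $h$-based proof of (\ref{eqn:35})); your factorizations and leading-term checks are accurate.
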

  
  \noindent {\bf Remark.} Cubing these identities yields the identities \cite[(4.24),(4.25)]{bz}.
  
  \begin{proof}
  I will give two proofs of (\ref{eqn:35}).  First note that this identity is equivalent to
  $$ \frac{u^4}{v^2}+\frac{w^4}{u^2}+\frac{v^4}{w^2} = 2\eta(\tau) \eta(7\tau)^2 \frac{\eta(\tau)}{\eta^2(7\tau)} j_7^*(\tau)^{1/3},$$
  and by (\ref{eqn:4}) and (\ref{eqn:22}) this is equivalent to
  \begin{equation}
  \frac{u^4}{v^2}+\frac{w^4}{u^2}+\frac{v^4}{w^2} = 2uvw \left(\frac{v}{u^2}-\frac{w}{v^2}+\frac{u}{w^2}\right).
  \label{eqn:37}
  \end{equation}
  Let
  $$A = \frac{u^4}{v^2}+\frac{w^4}{u^2}+\frac{v^4}{w^2}, \ \ B = uvw \left(\frac{v}{u^2}-\frac{w}{v^2}+\frac{u}{w^2}\right).$$
  For the first proof, we simply factor the expression
  $$A-2B = \frac{(u^3w-v^3u+w^3v)^2}{u^2 v^2 w^2};$$
  this is $0$ by Corollary \ref{cor:1}.  Hence $A = 2B$.  For the second proof, we write $A$ and $B$ in terms of the function $h$.  Using $uv^2 = w^3h$ we have
  \begin{align*}
  A & = \frac{u^6w^2+u^2v^6+w^6v^2}{u^2 v^2 w^2} = \frac{(v^3 u-w^3v)^2+u^2v^6+w^6v^2}{u^2 v^2 w^2}\\
  & = \frac{2v^2}{u^2 v^2 w^2}(u^2v^4-uv^2w^3+w^6) = \frac{2}{u^2 w^2}(h^2w^6-hw^6+w^6)\\
  & = \frac{2w^4}{u^2}(h^2-h+1).
  \end{align*}
  Similarly,
  \begin{align*}
  B & = \frac{uvw}{u^2 w^2}\left(vw^2-\frac{w^3u^2}{v^2}+u^3\right)\\
  & = \frac{uvw}{u^2 w^2}\left(\frac{u^3}{h-1}-\frac{u^3}{h}+u^3\right)\\
  & = \frac{u^2v}{w}\left(\frac{1}{h-1}-\frac{1}{h}+1\right)\\
  & = \frac{u^2v}{w} \left(\frac{h^2-h+1}{h(h-1)}\right).
  \end{align*}
  Now from (\ref{eqn:25}) we have that $h(h-1) = \frac{u^4 v}{w^5}$,
 which gives that
 $$B = \frac{u^2v}{w} \frac{w^5}{u^4v} (h^2-h+1) = \frac{w^4}{u^2} (h^2-h+1).$$
  Thus, we find again that $A = 2B$.  This proves (\ref{eqn:35}). \medskip
  
  To prove (\ref{eqn:36}), note that this identity is equivalent to
  $$\frac{v^2}{u^4}+\frac{u^2}{w^4}+\frac{w^2}{v^4} = \left(\frac{\eta(\tau)}{\eta^2(7\tau)} j_7^*(\tau)^{1/3}\right)^2,$$
  which is equivalent to
  $$\frac{v^2}{u^4}+\frac{u^2}{w^4}+\frac{w^2}{v^4} = \left(\frac{v}{u^2}-\frac{w}{v^2}+\frac{u}{w^2}\right)^2.$$
  However,
  $$\frac{v^2}{u^4}+\frac{u^2}{w^4}+\frac{w^2}{v^4} - \left(\frac{v}{u^2}-\frac{w}{v^2}+\frac{u}{w^2}\right)^2 = \frac{2(u^3w-v^3u+w^3v)}{u^2 v^2 w^2} = 0.$$
  This proves (\ref{eqn:36}).
  \end{proof}

\noindent Dept. of Mathematical Sciences, LD 270 \smallskip

\noindent Indiana University - Purdue University at Indianapolis \smallskip

\noindent 402 N. Blackford St., Indianapolis, Indiana, USA, 46202. \smallskip

\noindent e-mail: pmorton@iupui.edu

\end{document}